\documentclass[12pt,a4paper]{amsart}
\usepackage{latexsym}
\usepackage{amssymb}
\usepackage{calc}
\usepackage[all]{xy}
\UseTips
\newdir^{c}{{}*!/-3pt/\dir^{(}}
\newdir_{c}{{}*!/-3pt/\dir_{(}}
\usepackage{aliascnt} 
\usepackage[colorlinks=true,backref,hyperindex]{hyperref}
\usepackage[dvipsnames,usenames]{color}
\hypersetup{linkcolor=RawSienna,anchorcolor=BurntOrange,citecolor=OliveGreen,filecolor=BlueViolet,menucolor=Yellow,urlcolor=OliveGreen}

\usepackage[marginratio={1:1, 2:3}]{geometry}

%
%

\numberwithin{equation}{section}
\frenchspacing

\theoremstyle{plain}   

\newtheorem{thm}{Theorem}[section]    
\newtheorem*{thm*}{Theorem}           

%
%
\newcommand{\maketheorem}[2]{%
    \newaliascnt{#1}{thm}
    \newtheorem{#1}[#1]{#2}
    \aliascntresetthe{#1}
    \expandafter\def\csname #1autorefname\endcsname{#2}
    \newtheorem{#1*}{#2}
}%

\maketheorem{prop}{Proposition}
\maketheorem{cor}{Corollary}
\maketheorem{lem}{Lemma}
\maketheorem{conj}{Conjecture}

\theoremstyle{definition}    

\maketheorem{defn}{Definition}

\theoremstyle{remark}    

\maketheorem{rem}{Remark}
\maketheorem{ex}{Example}

\setcounter{secnumdepth}{2} 
\setcounter{tocdepth}{2}    

%
%
\makeatletter
\renewcommand{\theenumi}{\@alph\c@enumi}
\renewcommand{\theenumii}{\@roman\c@enumii}

\renewcommand{\theenumiii}{\@Alph\c@enumiii}

\renewcommand{\theenumiv}{\@Roman\c@enumiv}

\renewcommand{\labelitemi}{$\m@th\circ$}
\renewcommand{\labelitemii}{$\m@th\diamond$}
\renewcommand{\labelitemiii}{$\m@th\star$}
\renewcommand{\labelitemiv}{$\m@th\cdot$}
\makeatother

\newcommand{\Fr}{\sigma}
\newcommand{\FrG}{\Fr_\gamma}
\newcommand{\CO}{\mathcal{O}}
\newcommand{\CM}{\mathcal{M}}
\renewcommand{\r}{\mathrm{r}}
\newcommand{\Hom}{\operatorname{Hom}}
\newcommand{\End}{\operatorname{End}}
\newcommand{\usc}{\underline{\phantom{M}}}
\newcommand{\tensor}{\otimes}
\newcommand{\Spec}{\operatorname{Spec}}
\newcommand{\defeq}{\stackrel{\scriptscriptstyle \operatorname{def}}{=}}
\newcommand{\Gen}{\mathsf{Gen}}
\newcommand{\Ker}{\operatorname{ker}}
\newcommand{\Coker}{\operatorname{coker}}
\newcommand{\Image}{\operatorname{image}}
\renewcommand{\min}{\text{min}}
\renewcommand{\to}[1][]{\xrightarrow{\ #1\ }}
\newcommand{\onto}[1][]{\protect{\xrightarrow{\ #1\ }\hspace{-0.8em}\rightarrow}}
\newcommand{\into}[1][]{\lhook \joinrel \xrightarrow{\ #1\ }}
\renewcommand{\frm}{\mathfrak{m}}

\renewcommand{\phi}{\varphi}
\renewcommand{\min}{\mathrm{min}}

\newcommand{\CL}{\mathcal{L}}
\newcommand{\Ext}{\operatorname{Ext}}
\newcommand{\id}{\operatorname{id}}


\newcommand{\CohG}{\mathbf{Coh}_\gamma}

\newcommand{\CrysG}{\mathbf{Crys}_\gamma}
\newcommand{\MinG}{\mathbf{Min}_\gamma}

\newcounter{themargin}
\setcounter{themargin}{1}
\def\m?#1{\textcolor{Mahogany}{\textbf{???$^{\text{\arabic{themargin}}}$}}{\marginpar{\footnotesize\color{Mahogany}\fbox{\parbox{\marginparwidth}{\textbf{mnl --- \arabic{themargin} ---}\addtocounter{themargin}{1}\\ #1}}} \immediate\write16{}%
\immediate\write16{Warning: There was still a question mark . . . }%
\immediate\write16{}}}

\begin{document}

\title[Minimal $\gamma$--sheaves]{Minimal $\gamma$--sheaves}
\author{Manuel Blickle}
\address{Mathematik Essen\\ Universit\"at Duisburg-Essen\\ 45117 Essen\\ Germany}
\email{manuel.blickle@uni-due.de}
\urladdr{www.mabli.org}
\thanks{During the preparation of this article the author was supported by the \emph{DFG Schwerpunkt Komplexe Geometrie}.}
\subjclass[2000]{13A35}

\begin{abstract}
    In this note we show that finitely generated unit $\CO_X[\Fr]$--modules for $X$ regular and $F$--finite have a minimal root (in the sense of \cite{Lyub} Definition~3.6). This question was asked by Lyubeznik and answered by himself in the complete case.
    In fact, we construct a minimal subcategory of the category of coherent $\gamma$--sheaves (in the sense of \cite{BliBoe.CartierCrys}) which is equivalent to the category of $\gamma$--crystals. Some applications to tight closure are included at the end of the paper.
\end{abstract}
\maketitle

\section{Introduction}
In \cite{Lyub} introduces the category of finitely generated unit $R[\Fr]$--modules and applies the resulting theory successfully to study finiteness properties of local cohomology modules. One of the main tools in proving results about unit $R[\Fr]$--modules is the concept of a generator or root. In short, a generator (later on called $\gamma$--sheaf) is a finitely generated module $M$ together with a map $\gamma: M \to \Fr^*M$. By repeated application of $\Fr^*$ to this map one obtains a direct limit system, whose limit we call $\Gen M$. One checks easily that $\gamma$ induces an map $\Gen M \to \Fr^*\Gen M$ which is an isomorphism. A finitely generated unit $R[\Fr]$--module $\CM$ is precisely a module which is isomorphic to $\Gen M$ for some $\gamma$--sheaf $(M,\gamma)$, it hence comes equipped with an isomorphism $\CM \cong \Fr^* \CM$. Of course, different $\gamma$--sheaves may generate isomorphic unit $R[\Fr]$--modules so the question arises if there is a unique minimal (in an appropriate sense) $\gamma$--sheaf that generates a given unit $R[\Fr]$--module. In the case that $R$ is complete, this is shown to be the case in \cite{Lyub} Theorem 3.5. In \cite{Bli.int} this is extended to the case that $R$ is local (at least if $R$ is $F$-finite). The purpose of this note is to prove this in general, i.e for any $F$--finite regular ring $R$ (see \autoref{t.MinFunct}). A notable point in the proof is that it does not rely on the hard finiteness result \cite{Lyub} Theorem 4.2, but only on the (easier) local case of it which is in some sense proven here \emph{en passant} (see \autoref{r.finitelength}).

The approach in this note is not the most direct one imaginable since we essentially develop a theory of minimal $\gamma$--sheaves from scratch (\autoref{s.nilpGamma}). However, with this theory at hand, the results on minimal generators are merely a corollary. The ideas in this paper have two sources. Firstly, the ongoing project \cite{BliBoe.CartierCrys} of the author with Gebhard B\"ockle lead to a systematic study of $\gamma$--sheaves (the notation $\gamma$--sheaf is chosen to remind of the notion of a \textbf{g}enerator introduced in \cite{Lyub}). Secondly, insight gained from the $D$--module theoretic viewpoint on generalized test ideals developed in \cite{BliMusSmi.hyp} lead to the observation that these techniques can be successfully applied to study $\gamma$--sheaves.

In the final \autoref{s.final} we give some applications of the result on the existence of minimal $\gamma$--sheaves. First, we show that the category of minimal $\gamma$--sheaves is equivalent to the category $\gamma$--crystals of \cite{BliBoe.CartierCrys}. We show that a notion from tight closure theory, namely the parameter test module, is a global object (\autoref{t.GlobalParTest}). Statements of this type are notoriously hard in the theory of tight closure. Furthermore, we give a concrete description of minimal $\gamma$--sheaves in a very simple case (\autoref{t.MinimalTestIdealChar}), relating it to the generalized test ideals studied in \cite{BliMusSmi.hyp}. This viewpoint also recovers (and slightly generalizes, with new proofs) the main results of \cite{BliMusSmi.hyp} and \cite{AlvBliLyub.GeneratorsDmod}. A similar generalization, however using different (but related) methods, was recently obtained independently by Lyubeznik, Katzman and Zheng in \cite{LyubKazmZheng}.

\subsection*{Notation}
Throughout we fix a \emph{regular} scheme $X$ over a field $k \supseteq \mathbb{F}_q$ of characteristic $p > 0$ (with $q=p^e$ fixed). We further assume that $X$ is $F$--finite, i.e. the Frobenius morphism $\Fr: X \to X$, which is given by sending $f \in \CO_X$ to $f^q$, is a finite morphism\footnote{It should be possible to replace the assumption of $F$--finiteness to saying that if $X$ is a $k$--scheme with $k$ a filed that the relative Frobenius $\Fr_{X/k}$ is finite. This would extend the results given here to desirable situations such as $X$ of finite type over a field $k$ with $[k:k^q]=\infty$. The interested reader should have no trouble to adjust our treatment to this case.}. In particular, $\Fr$ is affine. This allows to reduce in many arguments below to the case that $X$ itself is affine and I will do so if convenient. We will use without further mention that because $X$ is regular, the Frobenius morphism $\Fr \colon X \to X$ is flat such that $\Fr^*$ is an exact functor (see \cite{Kunz}).

\section{Minimal \texorpdfstring{$\gamma$}{Gamma}--sheaves}\label{s.nilpGamma}
We begin with recalling the notion of $\gamma$--sheaves and nilpotence.
\begin{defn}\label{d.gammaSheaf}
    A \emph{$\gamma$--sheaf} on $X$ is a pair $(M,\gamma_M)$ consisting of a quasi-coherent $\CO_X$--module $M$ and a $\CO_X$--linear map $\gamma: M \to \Fr^*M$. A $\gamma$ sheaf is called \emph{coherent} if its underlying sheaf of $\CO_X$--modules is coherent.

    A $\gamma$--sheaf $(M,\gamma)$ is called \emph{nilpotent} (of order $n$) if $\gamma^n \defeq \Fr^{n*}\gamma \circ \Fr^{(n-1)*}\gamma \circ \ldots \circ \Fr^*\gamma \circ \gamma = 0$ for some $n > 0$. A $\gamma$--sheaf is called \emph{locally nilpotent} if it is the union of nilpotent $\gamma$ subsheaves.
\end{defn}
Maps of $\gamma$--sheaves are maps of the underlying $\CO_X$--modules such that the obvious diagram commutes.
The following proposition summarizes some properties of $\gamma$--sheaves, for proofs and more details see \cite{BliBoe.CartierCrys}.
\begin{prop}\label{prop.nilp-abcat}
  \begin{enumerate}
    \item The set of $\gamma$--sheaves forms an abelian category which is closed under extensions.
    \item The coherent, nilpotent and locally nilpotent $\gamma$-sheaves are abelian subcategories, also closed under extension.
  \end{enumerate}
\end{prop}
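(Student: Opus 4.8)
The plan is to deduce both parts from one structural fact — that $\Fr^*$ is an additive, exact endofunctor of the category of $\CO_X$-modules (here regularity of $X$, equivalently flatness of Frobenius, is used) — so that the category of $\gamma$-sheaves is of the formal type ``pairs consisting of an object $M$ of an abelian category $\mathcal A$ and a morphism $M\to FM$, for a fixed exact endofunctor $F$ of $\mathcal A$'', a construction which automatically produces an abelian category. The only genuinely computational input is the bookkeeping identity $\gamma_M^{a+b}=\Fr^{b*}(\gamma_M^a)\circ\gamma_M^b$, immediate from the definition of $\gamma^n$, together with the evident compatibilities: for a $\gamma$-subsheaf $N\subseteq M$ the map $\gamma_N^n$ is the restriction of $\gamma_M^n$ (along $\Fr^{n*}N\subseteq\Fr^{n*}M$), and $\gamma_{M/N}^n$ is the map on quotients induced by $\gamma_M^n$.

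\emph{Part (1).} For a morphism $f\colon(M,\gamma_M)\to(N,\gamma_N)$ I take $K=\Ker f$ and $C=\Coker f$ as $\CO_X$-modules. Exactness of $\Fr^*$ identifies $\Fr^*K$ with the kernel of $\Fr^*f$ inside $\Fr^*M$ and $\Fr^*C$ with its cokernel; therefore $\gamma_M$ restricted to $K$ factors uniquely through $\Fr^*K$ (as $\Fr^*f\circ\gamma_M|_K=\gamma_N\circ f|_K=0$) and $\gamma_N$ followed by the surjection $\Fr^*N\onto\Fr^*C$ annihilates $\Image f\subseteq N$, hence factors uniquely through $C$. This puts $\gamma$-structures on $K$ and $C$, and a routine diagram chase identifies them as kernel and cokernel of $f$ in $\gamma$-sheaves. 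Biproducts are $(M\oplus N,\gamma_M\oplus\gamma_N)$; and the canonical comparison $\operatorname{coim}f\to\Image f$ is an isomorphism because the forgetful functor to $\CO_X$-modules is faithful, reflects isomorphisms (the inverse of a morphism whose underlying map is invertible is automatically a $\gamma$-morphism), and carries this comparison to the corresponding isomorphism for $\CO_X$-modules. Hence $\QCohG$ is abelian and the forgetful functor is exact; closure under extensions then says that in an exact sequence $M'\into M\onto M''$ of pairs $(\CO_X\text{-module},\gamma)$ with $M',M''$ quasi-coherent the term $M$ is quasi-coherent, which holds because quasi-coherence is an extension-closed property of $\CO_X$-modules.

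\emph{Part (2).} The coherent $\gamma$-sheaves are stable under the kernel, cokernel, biproduct and extension constructions above because coherent $\CO_X$-modules are stable under these among quasi-coherent ones and $\Fr^*$ preserves coherence ($\Fr$ being finite); so $\CohG$ is an abelian subcategory of $\QCohG$ closed under extensions. For the nilpotent $\gamma$-sheaves, the compatibilities of the first paragraph show at once that a $\gamma$-subsheaf and a quotient of a $\gamma$-sheaf nilpotent of order $n$ are nilpotent of order $\le n$; and for an extension $M'\into M\onto M''$ with $M'$ nilpotent of order $a$ and $M''$ of order $b$, exactness of $\Fr^{b*}$ makes $\gamma_M^b$ factor through $\Fr^{b*}M'\subseteq\Fr^{b*}M$ (its composite to $\Fr^{b*}M''$ is $\gamma_{M''}^b=0$), and then the bookkeeping identity with $\gamma_{M'}^a=0$ gives $\gamma_M^{a+b}=0$; biproducts are handled by the maximum of the orders. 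The locally nilpotent case reduces to this. First, any coherent $\gamma$-subsheaf $T$ of an extension of locally nilpotent sheaves is nilpotent: working over an affine (Noetherian) piece, $T$ is finitely generated, and its generators lie jointly in a single nilpotent $\gamma$-subsheaf (nilpotent subsheaves form a directed system, sums of nilpotents being nilpotent), so $T$ is contained in a nilpotent subsheaf and is nilpotent. Next, a $\gamma$-subsheaf of a locally nilpotent sheaf is the filtered union of its intersections with the nilpotent subsheaves, a quotient is the union of their images, and for an extension $M'\into M\onto M''$ with $M',M''$ locally nilpotent every local section $m$ of $M$ lies in a nilpotent $\gamma$-subsheaf: its image $\bar m$ lies in a nilpotent $\gamma$-subsheaf of $M''$ of some order $b$, so $\gamma_M^b(m)\in\Fr^{b*}M'$; writing this element through finitely many sections of $M'$, which lie jointly in a nilpotent $\gamma$-subsheaf of $M'$ of some order $a$, gives $\gamma_M^{a+b}(m)=0$ and hence $\gamma_M^i(m)=0$ for all $i\ge a+b$; since $\Fr^*$ preserves finite generation, the $\gamma$-subsheaf generated by $m$ is therefore coherent, so by the first step it is nilpotent.

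I expect the only part requiring real thought, rather than formal transport through the exact functor $\Fr^*$, to be this last (locally nilpotent, extension) case: one must combine the order bookkeeping with the finiteness coming from $F$-finiteness and the Noetherian hypothesis to see that the relevant $\gamma$-subsheaf is coherent, hence genuinely nilpotent. Everything else is obtained by transporting the abelian-category and extension-closure statements for $\CO_X$-modules, for quasi-coherent sheaves and for coherent sheaves across $\Fr^*$.
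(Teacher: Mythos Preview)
Your argument is correct and follows exactly the route the paper sketches—use exactness of $\Fr^*$ (flatness of Frobenius on the regular $X$) to transport kernels, cokernels and extensions, then do the order bookkeeping $\gamma^{a+b}=\Fr^{b*}(\gamma^a)\circ\gamma^b$ for the nilpotent and locally nilpotent subcategories; the paper in fact only writes out the kernel case and leaves (b) to the reader. One presentational point in your locally-nilpotent extension argument: your ``first step'' (a coherent $\gamma$-subsheaf $T$ of the extension $M$ is nilpotent) assumes the generators of $T$ already sit in nilpotent $\gamma$-subsheaves of $M$, which is only supplied by your ``second step'', while the second step then invokes the first—this apparent circularity dissolves once you observe that the second step already gives nilpotency directly, since $\gamma^{a+b}$ vanishes on every $\CO_X$-generator of the coherent $\gamma$-subsheaf produced from $m$ (each such generator is a component of some $\gamma^j(m)$ with $j<a+b$), so no appeal to the first step is actually needed there.
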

\begin{proof}
    The point in the first statement is that the $O_X$--module kernel, co-kernel and extension of (maps of) $\gamma$--sheaves naturally carrys the structure of a $\gamma$--sheaf. This is really easy to verify such that we only give the construction of the $\gamma$--structure on the kernel as an illustration.
    Recall that we assume that $X$ is regular such that $\Fr$ is flat, hence $\Fr^*$ is an exact functor. If $\phi:  M \to N$ is a homomorphism of $\gamma$-sheaves, i.e. a commutative diagram
    \[
        \xymatrix{ M \ar[rr]^{\phi} \ar[d]_{\gamma_{M}} && N \ar[d]^{\gamma_{N}} \\
        {\Fr^* M} \ar[rr]^{\Fr^*\phi} && {\Fr^*N}
        }
    \]
    which induces a map $\Ker \phi \to \Ker (\Fr^*\phi)$. Since $\Fr^*$ is exact, the natural map $\Fr^*(\Ker \phi) \to \Ker (\Fr^*\phi)$ is an isomorphism. Hence the composition
    \[
    \ker \phi \to \ker(\Fr^*\phi) \to[\cong] \Fr^*(\ker \phi)
    \]
    equips $\ker \phi$ with a natural structure of a $\gamma$--sheaf.

    The second part of \autoref{prop.nilp-abcat} is also easy to verify such that we leave it to the reader, cf. the proof of \autoref{lem.nil-iso} below.
\end{proof}

\begin{lem}\label{lem.nil-iso}
    A morphism $\phi: M \to N$ of $\gamma$-sheaves is called \emph{nil-injective} (resp. \emph{nil-surjective}, \emph{nil-isomorphism}) if its kernel (resp. cokernel, both) is locally nilpotent.
    \begin{enumerate}
        \item If $N$ is coherent and $\phi$ is \emph{nil-injective} (resp. \emph{nil-surjective}) then $\Ker \phi$ (resp. $\Coker \phi$) is nilpotent.
        \item Kernel and cokernel of $\phi$ are nilpotent (of order $n$ and $m$ resp.) if and only if there is, for some $k \geq 0$ ($k=n+m$), a map $\psi: N \to \Fr^{k*}M$ such that $\gamma^k_M = \psi \circ \phi$.
        \item If $N$ is nilpotent of degree $\leq n$ (i.e. $\gamma_N^n=0$) and $N' \subseteq N$ contains the kernel of $\gamma^i_N$ for $1 \leq i \leq n$, then $N'$ is nilpotent of degree $\leq i$ and $N/N'$ is nilpotent of degree $\leq n-i$.
    \end{enumerate}
\end{lem}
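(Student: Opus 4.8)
I would base all three parts on two elementary facts: the composition law $\gamma^{a+b}_M=\Fr^{b*}(\gamma^a_M)\circ\gamma^b_M$ ($a,b\ge0$), immediate from the definition of $\gamma^n$; and the flatness of the Frobenius on the regular scheme $X$, which makes each $\Fr^{j*}$ exact, hence compatible with kernels, images and inclusions of subsheaves. A first consequence I would record is that $\Ker\gamma^i_N$ is a sub-$\gamma$-sheaf of any $\gamma$-sheaf $N$: if $\gamma^i_N(x)=0$ then $\gamma^{i+1}_N(x)=0$ by the composition law, so $\gamma_N(x)\in\Ker\Fr^*(\gamma^i_N)=\Fr^*(\Ker\gamma^i_N)$; dually, quotients carry a $\gamma$-structure. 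For part~(1) the one genuine input is that a coherent, locally nilpotent $\gamma$-sheaf is nilpotent --- written as a directed union of nilpotent sub-$\gamma$-sheaves, the ascending chain condition forces it to equal one of them, giving a uniform bound on $\gamma$-powers. I would then apply this to $\Coker\phi$, a quotient of the coherent $N$, for the nil-surjective case, and to $\Ker\phi\subseteq M$ for the nil-injective case. This Noetherian fact is the only finiteness used (cf.~\autoref{r.finitelength}).

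For part~(2) I would factor $\phi$ as $M\twoheadrightarrow I\hookrightarrow N$ with $I=\Image\phi$, and set $n$, $m$ to be the nilpotence orders of $\Ker\phi$ and $\Coker\phi$. For the forward direction: $\gamma^n_M$ kills $\Ker\phi$, so it descends to $\bar\gamma\colon I\to\Fr^{n*}M$; and $\gamma^m_N(N)\subseteq\Fr^{m*}I$ since $\Coker\phi$ is nilpotent of order $m$, so $\gamma^m_N$ factors as $N\to[\theta]\Fr^{m*}I\hookrightarrow\Fr^{m*}N$. Then $\psi\defeq\Fr^{m*}\bar\gamma\circ\theta\colon N\to\Fr^{(n+m)*}M$, and a short chase with the composition law and exactness of $\Fr^{m*}$ should give $\psi\circ\phi=\Fr^{m*}(\gamma^n_M)\circ\gamma^m_M=\gamma^{n+m}_M$ (so $k=n+m$) together with the companion identity $\Fr^{(n+m)*}\phi\circ\psi=\gamma^{n+m}_N$. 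For the converse: $\psi\circ\phi=\gamma^k_M$ shows $\gamma^k_M$ kills $\Ker\phi$, so $\Ker\phi$ is nilpotent of order $\le k$; and the companion relation $\Fr^{k*}\phi\circ\psi=\gamma^k_N$ shows $\gamma^k_N(N)\subseteq\Image(\Fr^{k*}\phi)=\Fr^{k*}\Image\phi$, so $\Coker\phi$ is nilpotent of order $\le k$.

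Part~(3) I would then read off directly. Take $N'=\Ker\gamma^i_N$: it is a sub-$\gamma$-sheaf, and $\gamma^i_{N'}$ is the restriction of $\gamma^i_N$ to $N'$, which is $0$, so $N'$ is nilpotent of degree $\le i$; and from $\gamma^n_N=0$ the composition law gives $0=\Fr^{(n-i)*}(\gamma^i_N)\circ\gamma^{n-i}_N$, whence $\gamma^{n-i}_N(N)\subseteq\Ker\Fr^{(n-i)*}(\gamma^i_N)=\Fr^{(n-i)*}(\Ker\gamma^i_N)=\Fr^{(n-i)*}N'$, i.e.\ $N/N'$ is nilpotent of degree $\le n-i$. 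For a larger $N'$ containing $\Ker\gamma^i_N$, the quotient $N/N'$ is a further quotient and the same bound persists.

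I expect the main obstacle to be part~(2): getting the order to come out exactly $k=n+m$, and --- in the converse --- controlling $\Coker\phi$ rather than just $\Ker\phi$, which is why one must carry the two-sided relation $\psi\circ\phi=\gamma^k_M$ \emph{together with} $\Fr^{k*}\phi\circ\psi=\gamma^k_N$; the relation $\psi\circ\phi=\gamma^k_M$ alone governs only the kernel. Everything else should be bookkeeping with the composition law and the exactness of $\Fr^*$.
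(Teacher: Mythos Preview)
Your proposal is correct and follows the same line as the paper: Noetherianness for part~(1), factoring $\phi$ through its image and chasing the resulting two-step diagram for part~(2), and the composition law $\gamma^n=\Fr^{(n-i)*}(\gamma^i)\circ\gamma^{n-i}$ together with exactness of $\Fr^*$ for part~(3). Your caution in part~(2) is well placed: the relation $\psi\circ\phi=\gamma^k_M$ alone bounds only $\Ker\phi$ (take $M=0$, $N$ non--nilpotent, $\psi=0$), and the paper's diagram argument tacitly uses the companion identity $\Fr^{k*}\phi\circ\psi=\gamma^k_N$ --- automatically satisfied by the $\psi$ you construct --- to control the cokernel.
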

\begin{proof}
    The first statement is clear since $X$ is noetherian. For the second statement consider the diagram obtained from the exact sequence $0 \to K \to M \to N \to C \to 0$.
    \[
    \xymatrix{ 0 \ar[r] &K \ar[r]\ar[d] &M \ar[r]\ar[d] &N \ar[r]\ar[d]\ar@{..>}[ldd]\ar[ld]_{\psi} &C \ar[r]\ar[d]^0 &0 \\
        0 \ar[r] &\Fr^{n*}K \ar[r]\ar[d]^0 &\Fr^{n*}M \ar[r]\ar[d] &\Fr^{n*}N \ar[r]\ar[d] &\Fr^{n*}C \ar[r]\ar[d] &0 \\
        0 \ar[r] &\Fr^{(n+m)*}K \ar[r] &\Fr^{(n+m)*}M \ar[r] &\Fr^{(n+m)*}N \ar[r] &\Fr^{(n+m)*}C \ar[r] &0
    }
    \]
    If there is $\psi$ as indicated, then clearly the leftmost and rightmost vertical arrows of the first row are zero, i.e. $K$ and $C$ are nilpotent. Conversely, let $K=\Ker \phi$ be nilpotent of degree $n$ and $C=\Coker \phi$ be nilpotent of degree $m$. Then the top right vertical arrow and the bottom left vertical arrow are zero. This easily implies that there is a dotted arrow as indicated, which will be the sought after $\psi$.

    For the last part, the statement about the nilpotency of $N'$ is trivial. Consider the short exact sequence $0 \to N' \to N \to N/N' \to 0$ and the diagram one obtains  by considering $\Fr^{(n-i)*}$ and $\Fr^{n*}$ of this sequence.
    \[
    \xymatrix{
    0 \ar[r] & N'\ar[r]\ar[d] & N\ar[r]\ar[d]^{\gamma^{n-i}} & N/N'\ar[r]\ar[d] & 0 \\
    0 \ar[r] & \Fr^{(n-i)*}N'\ar[r]\ar[d]^{0} & \Fr^{(n-i)*}N\ar[r]\ar[d]^{\Fr^{(n-i)*}\gamma^i} & \Fr^{(n-i)*}(N/N')\ar[r]\ar[d] & 0 \\
    0 \ar[r] & \Fr^{n*}N'\ar[r] & \Fr^{n*}N\ar[r] & \Fr^{n*}(N/N')\ar[r] & 0
    }
    \]
    The composition of the middle vertical map is $\gamma_N^n$ which is zero by assumption. To conclude that the top right vertical arrow is zero one uses the fact that $\sigma^{(n-i)^*}N' \supseteq \sigma^{(n-i)*}\Ker \gamma^i = \Ker (\sigma^{(n-i)^*}\gamma^i)$. With this it is an easy diagram chase to conclude that the top right vertical map is zero.
\end{proof}

\begin{lem}\label{t.lemma.basic}
Let $M \to[\phi] N$ be a map of $\gamma$--sheaves. Let $N' \subseteq N$ be such that $N/N'$ is nilpotent (hence $N' \subseteq N$ is a nil-isomorphism). Then $M/(\phi^{-1}N')$ is also nilpotent.
\end{lem}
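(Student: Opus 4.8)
The plan is to exhibit $M/\phi^{-1}(N')$ as a $\gamma$--subsheaf of $N/N'$ and then to use that $\gamma$--subsheaves of nilpotent $\gamma$--sheaves are again nilpotent. Concretely, I would consider the composite morphism of $\gamma$--sheaves
\[
\psi \colon M \to[\phi] N \onto N/N',
\]
where the second arrow is the canonical projection; it is a morphism of $\gamma$--sheaves because $N'$ is a $\gamma$--subsheaf of $N$. By construction $\phi^{-1}(N') = \Ker \psi$, and since the category of $\gamma$--sheaves is abelian (\autoref{prop.nilp-abcat}), the first isomorphism theorem yields an isomorphism of $\gamma$--sheaves $M/\phi^{-1}(N') \cong \Image \psi \subseteq N/N'$.

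Next I would record the general fact that any $\gamma$--subsheaf $P \subseteq Q$ of a nilpotent $\gamma$--sheaf $Q$ with $\gamma_Q^n = 0$ is itself nilpotent of order $\leq n$. This is the only place where the standing regularity hypothesis enters: it makes $\Fr^{n*}$ exact, so applying $\Fr^{n*}$ to the inclusion $P \hookrightarrow Q$ produces an injection $\Fr^{n*}P \hookrightarrow \Fr^{n*}Q$. The naturality of $\gamma^n$ with respect to this inclusion shows that the composite $P \to[\gamma_P^n] \Fr^{n*}P \hookrightarrow \Fr^{n*}Q$ agrees with $P \hookrightarrow Q \to[\gamma_Q^n] \Fr^{n*}Q$, which is zero; since $\Fr^{n*}P \hookrightarrow \Fr^{n*}Q$ is injective, this forces $\gamma_P^n = 0$. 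Applying this with $Q = N/N'$ (nilpotent of some order $n$ by hypothesis) and $P = \Image \psi \cong M/\phi^{-1}(N')$ then finishes the proof.

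I do not expect any real obstacle here, as the whole argument is formal. The two points worth stating carefully are that all kernels, images and quotients must be formed inside the abelian category of $\gamma$--sheaves (so that the $\gamma$--structures match up, as in the kernel construction of \autoref{prop.nilp-abcat}), and that the pushforward of an inclusion along $\Fr^{n*}$ remains injective --- which is exactly where ``$X$ regular'', equivalently ``$\Fr$ flat'', is used. Note that this proof does not invoke \autoref{lem.nil-iso}, although the claim could alternatively be extracted from part~(iii) of that lemma after pulling $N'$ back to $M$.
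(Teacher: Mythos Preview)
Your proof is correct and follows essentially the same idea as the paper: both arguments identify $M/\phi^{-1}(N')$ with a $\gamma$--subsheaf of the nilpotent sheaf $N/N'$. The paper reaches this by factoring $\phi$ as $M \onto \Image\phi \hookrightarrow N$ and invoking the Snake Lemma on the two pieces, whereas you compose with the projection $N \to N/N'$ and apply the first isomorphism theorem directly; your route is a bit cleaner but not genuinely different.
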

\begin{proof}
    If $\phi$ is injective/surjective, the Snake Lemma shows that $M/(\phi^{-1}N')$ injects/surjects to $N/N'$. Now split $\phi$ into $M \onto \Image \phi \into N$.
\end{proof}

If $(M,\gamma)$ is a $\gamma$--sheaf, then $\Fr^*M$ is naturally a $\gamma$--sheaf with structural map $\Fr^*\gamma$. Furthermore, the map $\gamma: M \to \Fr^*M$ is then a map of $\gamma$--sheaves which is a \emph{nil-isomorphism}, i.e. kernel and cokernel are nilpotent. We can iterate this process to obtain a directed system
\begin{equation}\label{eq.Gen}
    M \to[\gamma] \Fr^*M \to[\Fr^*\gamma] \Fr^{2*}M \to[\Fr^{2*}\gamma] \ldots
\end{equation}
whose limit we denote by $\Gen M$. Clearly $\Gen M$ is a $\gamma$--sheaf whose structural map $\gamma_{\Gen M}$ is injective. In fact, it is an isomorphism since clearly $\Fr^* \Gen M \cong \Gen M$. Note that even if $M$ is coherent, $\Gen M$ is generally not coherent. Furthermore, let $\overline{M}$ be the image of $M$ under the natural map $M \to \Gen M$. Then, if $M$ is coherent, so is $\overline{M}$ and the map $M \onto \overline{M}$ is a nil-isomorphism. Since $\overline{M}$ is a $\gamma$--submodule of $\Gen M$ whose structural map is injective, the structural map $\overline{\gamma}$ of $\overline{M}$ is injective as well.
\begin{prop}
  The operation that assigns to each $\gamma$--sheaf $M$ its image $\overline{M}$ in $\Gen M$ is an end-exact functor (preserves exactness only at the end of sequences) from $\CohG(X)$ to $\CohG(X)$. The kernel $M^\circ=\bigcup \Ker \gamma^i_M$ of the natural map $M \to \overline{M}$ is the maximal (locally) nilpotent subsheaf of $M$.
\end{prop}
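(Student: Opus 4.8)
The plan is to establish the two assertions of the proposition in turn, keeping in mind that ``end-exact'' is genuinely weaker than right exact: what is true --- and all that I will prove --- is that the assignment $M\mapsto\overline M$ is an additive functor carrying surjections to surjections, and one should resist trying to prove exactness in the middle. For functoriality, a morphism $\phi\colon M\to N$ of $\gamma$--sheaves yields, by applying $\Fr^{i*}$ to \eqref{eq.Gen} and passing to the direct limit, a map $\Gen\phi\colon\Gen M\to\Gen N$ of $\gamma$--sheaves which commutes with the canonical maps $M\to\Gen M$ and $N\to\Gen N$; consequently $\Gen\phi$ sends $\overline M=\Image(M\to\Gen M)$ into $\overline N$, and $\overline\phi$ is defined as the induced map. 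Preservation of identities, composites and sums is inherited from $\Fr^{i*}$ and from the direct limit, and since $\overline M$ is a quotient of the coherent sheaf $M$ it is coherent; hence we do get a functor $\CohG(X)\to\CohG(X)$.

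Next I would pin down the kernel of $M\to\overline M$. As $\overline M$ is by construction a subsheaf of $\Gen M$, this kernel equals $\Ker(M\to\Gen M)$; and since $\Gen M$ is the direct limit of \eqref{eq.Gen}, in which the composite $M\to\Fr^{i*}M$ is $\gamma^i_M$, a local section of $M$ dies in $\Gen M$ exactly when it is annihilated by some $\gamma^i_M$. Thus $\Ker(M\to\overline M)=\bigcup_i\Ker\gamma^i_M=M^\circ$. That $M^\circ$ is a $\gamma$--subsheaf and is locally nilpotent I would deduce from the identity $\gamma^i_M=\Fr^*(\gamma^{i-1}_M)\circ\gamma_M$: together with exactness of $\Fr^*$ (so that $\Fr^*$ commutes with kernels) it gives $\gamma_M(\Ker\gamma^i_M)\subseteq\Fr^*(\Ker\gamma^{i-1}_M)\subseteq\Fr^*(\Ker\gamma^i_M)$, so each $\Ker\gamma^i_M$ is a $\gamma$--subsheaf, plainly nilpotent of order $\le i$; their increasing union $M^\circ$ is therefore locally nilpotent --- in fact nilpotent, since the chain stabilizes as $M$ is coherent and $X$ noetherian. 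For maximality, if $N\subseteq M$ is a nilpotent $\gamma$--subsheaf of order $n$, then since $\gamma_N$ is the restriction of $\gamma_M$ the iterate $\gamma^n_M$ restricts to $0$ on $N$, whence $N\subseteq\Ker\gamma^n_M\subseteq M^\circ$; an arbitrary locally nilpotent $\gamma$--subsheaf, being a union of such, is likewise contained in $M^\circ$. So $M^\circ$ is the maximal (locally) nilpotent $\gamma$--subsheaf of $M$.

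It remains to verify end-exactness. As $\Fr^*$ and direct limits are exact, $\Gen$ is an exact functor, so a surjection $\pi\colon N\onto P$ of coherent $\gamma$--sheaves induces a surjection $\Gen\pi\colon\Gen N\onto\Gen P$. Because $\pi$ is onto, $\overline P=\Image(P\to\Gen P)=\Image(N\to\Gen N\to\Gen P)$, and since $\overline N$ is the image of $N\to\Gen N$ this last image is just $\Gen\pi(\overline N)$; as $\Gen\pi$ carries $\overline N$ into $\overline P$, the induced map $\overline\pi\colon\overline N\to\overline P$ is onto, which is exactly the claimed preservation of exactness ``at the end''. I would then point out that one cannot hope for more: a short explicit example on $X=\mathbb{A}^1$ --- take $N$ a suitable extension of a nonzero nilpotent $\gamma$--sheaf by $(\CO_X,f\mapsto tf)$ --- shows that in the associated exact sequence $M\to N\to P\to 0$ one has $\pi(N^\circ)$ strictly smaller than $(N/\Ker\pi)^\circ=P^\circ$, so that $\overline M\to\overline N\to\overline P\to 0$ fails to be exact at $\overline N$. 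The only real obstacle here is conceptual, namely settling that ``end-exact'' should mean ``preserves epimorphisms''; once that is granted, the exactness of $\Gen$ together with the description of $\overline M$ as the image of $M$ in $\Gen M$ makes the verification short, and everything about $M^\circ$ is routine bookkeeping with the iterates $\gamma^i_M$, using only exactness of $\Fr^*$ and noetherianity of $X$.
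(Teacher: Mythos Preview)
Your argument is correct and follows the paper's route: the paper observes that $\overline{(\usc)}$ is the image of the natural transformation $\id\to\Gen$ between exact functors, cites the general fact that such an image is end-exact, and leaves the verification of the claim about $M^\circ$ to the reader. You have simply unpacked both steps explicitly --- the diagram chase for end-exactness and the bookkeeping with the iterates $\gamma^i_M$ for $M^\circ$ --- but the underlying idea is identical.
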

\begin{proof}
    The point is that one has a functorial map between the exact functors $\operatorname{id} \to \Gen$. An easy diagram chase shows that the image of such a functorial map is an end-exact functor (see for example \cite[2.17 Appendix 1]{Katz.RigidLocal}).
    The verification of the statement about $M^\circ$ left to the reader.
\end{proof}
Such $\gamma$--submodules with injective structural map enjoy a certain minimality property with respect to nilpotent subsheaves:
\begin{lem}\label{lem.inj-char}
  Let $(M,\gamma)$ be a $\gamma$--sheaf. The structural map $\gamma_M$ is injective if and only if $M$ does not have a non-trivial nilpotent subsheaf.
\end{lem}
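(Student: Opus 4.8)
The plan is to prove both implications directly; essentially the only ingredient beyond unwinding the definitions is the exactness of $\Fr^*$.

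First I would treat the implication ``$M$ has no non-trivial nilpotent subsheaf $\Rightarrow$ $\gamma_M$ is injective''. Set $K \defeq \Ker \gamma_M$. Since $\gamma_M(K) = 0$, which is trivially contained in $\Fr^*K$, the submodule $K$ is in fact a $\gamma$--subsheaf of $M$, and its structural map $\gamma_K$ is the zero map. Thus $K$ is nilpotent of order $1$, so the hypothesis forces $K = 0$, i.e.\ $\gamma_M$ is injective.

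For the converse, assume $\gamma_M$ is injective and let $N \subseteq M$ be a $\gamma$--subsheaf which is nilpotent, say $\gamma_N^{\,n} = 0$. By definition of a $\gamma$--subsheaf the structural map $\gamma_N$ is the corestriction of $\gamma_M|_N$ to $\Fr^*N$; since $\gamma_M$ is injective and $\Fr^*$ is exact (so that $\Fr^*N \into \Fr^*M$), the map $\gamma_N$ is injective. Now $\gamma_N^{\,n}$ is the composite $\Fr^{(n-1)*}\gamma_N \circ \cdots \circ \Fr^*\gamma_N \circ \gamma_N$ of maps of the form $\Fr^{i*}\gamma_N$, each of which is again injective because $\Fr^*$ preserves monomorphisms. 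Hence $\gamma_N^{\,n}$ is injective, and since it vanishes we conclude $N = 0$.

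There is no genuine obstacle here; the one point requiring a moment of care is the claim that the $n$--th iterate of an injective structural map stays injective, which is exactly where the flatness of Frobenius (equivalently, the regularity of $X$) enters. In fact the same argument shows a little more: the maximal nilpotent subsheaf $M^\circ = \bigcup_i \Ker \gamma_M^{\,i}$ from the preceding proposition is already nonzero as soon as $\Ker \gamma_M \neq 0$, and vanishes as soon as $\gamma_M$ is injective, so the lemma may equivalently be read as the equivalence $M^\circ = 0 \iff \gamma_M \text{ injective}$.
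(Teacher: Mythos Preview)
Your proof is correct and follows exactly the same approach as the paper's: one direction uses that $\Ker \gamma_M$ is a nilpotent $\gamma$--subsheaf, the other that any subsheaf of $M$ inherits an injective structural map and hence cannot be both nilpotent and nonzero. You simply spell out in detail (using exactness of $\Fr^*$) the step the paper states in one line, namely that a $\gamma$--sheaf with injective structural map is nilpotent only if it is zero.
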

\begin{proof}
  Assume that the structural map of $M$ is injective. This implies that the structural map of any $\gamma$-subsheaf of $M$ is injective. But a $\gamma$--sheaf with injective structural map is nilpotent if and only it is zero.

  Conversely, $\Ker \gamma_M$ is a nil-potent subsheaf of $M$. If $\gamma_M$ is not injective it is nontrivial.
\end{proof}

\subsection{Definition of minimal \texorpdfstring{$\gamma$}{Gamma}--sheaves}

\begin{defn}\label{d.minimal}
    A coherent $\gamma$--sheaf $M$ is called \emph{minimal} if the following two conditions hold.
    \begin{enumerate}
      \item $M$ does not have nontrivial nilpotent subsheaves.
      \item $M$ does not have nontrivial nilpotent quotients.
    \end{enumerate}
\end{defn}
A simple consequence of the definition is
\begin{lem}\label{lem.inj-sub.sur-quot}
  Let $M$ be a $\gamma$-sheaf. If $M$ satisfies (a) then any $\gamma$--subsheaf of $M$ also satisfies (a). If $M$ satisfies (b) the so does any quotient.
\end{lem}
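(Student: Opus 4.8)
The statement is essentially formal: it expresses the transitivity of the sub- and quotient relations, which in the abelian category $\CohG(X)$ (\autoref{prop.nilp-abcat}) behave exactly as in any abelian category. The plan is to give the two one-line arguments and to make sure that every object appearing really is a $\gamma$--sheaf of the expected kind.

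For the first assertion I would proceed as follows. Suppose $M$ satisfies (a) and let $N \subseteq M$ be a $\gamma$--subsheaf. If $N' \subseteq N$ were a nonzero nilpotent $\gamma$--subsheaf, then, since the inclusions $N' \into N$ and $N \into M$ are maps of $\gamma$--sheaves, so is their composite $N' \into M$; thus $N'$ is a nonzero $\gamma$--subsheaf of $M$, and it is still nilpotent because $\gamma_{N'}^n = 0$ makes no reference to the ambient sheaf. This contradicts (a) for $M$. Alternatively — and this is the phrasing I would actually record — by \autoref{lem.inj-char} condition (a) for $M$ is equivalent to injectivity of $\gamma_M$; the structural map $\gamma_N$ of a $\gamma$--subsheaf satisfies $\Fr^*(\text{incl})\circ\gamma_N=\gamma_M\circ(\text{incl})$, and the right-hand side is injective, so $\gamma_N$ is injective; applying \autoref{lem.inj-char} again gives (a) for $N$.

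For the second assertion, suppose $M$ satisfies (b) and let $q\colon M \onto Q$ be a surjection of $\gamma$--sheaves. If $Q \onto Q'$ were a nonzero nilpotent quotient of $Q$, then the composite $M \onto Q \onto Q'$ exhibits $Q'$ as a nonzero quotient $\gamma$--sheaf of $M$, again with $\gamma_{Q'}^n = 0$, contradicting (b). Equivalently, writing $Q = M/M'$ and $Q' = Q/(M''/M')$ for $\gamma$--subsheaves $M' \subseteq M'' \subseteq M$ (the correspondence between subobjects of $Q$ and subobjects of $M$ containing $M'$, valid in the abelian category $\CohG(X)$), one has $Q' \cong M/M''$, a quotient of $M$.

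There is no real obstacle here; the only points worth a word are that all these operations stay inside the category of $\gamma$--sheaves — that the kernel/quotient in question inherits a compatible $\gamma$--structure, which is exactly \autoref{prop.nilp-abcat} — and that ``nontrivial'' means nonzero as an $\CO_X$--module, a property visibly preserved by the identifications $N' \into M$ and $Q' \cong M/M''$ above.
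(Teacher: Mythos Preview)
Your argument is correct and is exactly the unpacking of the paper's one-line proof ``Immediate from the definition'': a nilpotent $\gamma$--subsheaf of $N\subseteq M$ is in particular one of $M$, and a nilpotent quotient of a quotient $Q$ of $M$ is in particular one of $M$. The alternative phrasing via \autoref{lem.inj-char} for part (a) is also fine and anticipates how the paper uses this lemma later.
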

\begin{proof} Immediate from the definition.
\end{proof}
As the preceding \autoref{lem.inj-char} shows, (a) is equivalent to the condition that the structural map $\gamma_M$ is injective. We give a concrete description of the second condition.
\begin{prop}\label{prop.char-quot}
  For a coherent $\gamma$--sheaf $M$, the following conditions are equivalent.
  \begin{enumerate}
    \item $M$ does not have nontrivial nilpotent quotients.
    \item For any map of $\gamma$-sheaves $\phi: N \to M$, if $\gamma_M(M) \subseteq \phi(\Fr^*N)$ (as subsets of $\Fr^*M$) then $\phi$ is surjective.
  \end{enumerate}
\end{prop}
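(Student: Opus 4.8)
The plan is to prove the two implications separately; in both, the assertion that a quotient $\gamma$--sheaf $R$ has $\gamma_R = 0$ gets rephrased, via exactness of $\Fr^*$ (which makes $\Fr^*$ commute with kernels and images), as a containment of submodules of $\Fr^* M$, and nothing beyond this bookkeeping is required.

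\textbf{(a) $\Rightarrow$ (b).} This is the quick direction. Given $\phi\colon N\to M$ with $\gamma_M(M)\subseteq\phi(\Fr^*N)$, I would replace $N$ by $I\defeq\phi(N)\subseteq M$, a $\gamma$--subsheaf, and look at $q\colon M\onto Q\defeq M/I$. By exactness of $\Fr^*$ one has $\phi(\Fr^*N)=\Fr^*I=\ker(\Fr^*q)$, so the hypothesis says $\Fr^*q\circ\gamma_M=0$; since this composite is also $\gamma_Q\circ q$ and $q$ is surjective, $\gamma_Q=0$, i.e.\ $Q$ is nilpotent. By (a), $Q=0$, i.e.\ $\phi$ is surjective.

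\textbf{(b) $\Rightarrow$ (a).} I would argue the contrapositive: from a nilpotent quotient $q\colon M\onto Q$ with $Q\neq 0$ I want to build a non-surjective $\phi$ witnessing the failure of (b). The key step is to pass from $Q$ to a \emph{nonzero} quotient $R$ of $M$ with $\gamma_R=0$. For this, take the least $n\geq 1$ with $\gamma_Q^n=0$ and set $S\defeq\ker(\gamma_Q^{n-1})$ (with $\gamma_Q^0=\id$); minimality of $n$ forces $\gamma_Q^{n-1}\neq 0$, hence $S\subsetneq Q$, so $R\defeq Q/S\neq 0$. Writing $r\colon Q\onto R$, exactness gives $\ker(\Fr^*r)=\Fr^*S=\ker(\Fr^*(\gamma_Q^{n-1}))$, while $\Fr^*(\gamma_Q^{n-1})\circ\gamma_Q=\gamma_Q^n=0$, so $\gamma_Q(Q)\subseteq\ker(\Fr^*r)$, which (as $r$ is epi) is exactly the statement $\gamma_R=0$. (When $n\geq 2$, this $R$ is also the quotient produced by \autoref{lem.nil-iso}(c) with $i=n-1$.) Finally let $\psi\colon M\onto R$ be the composite $r\circ q$ and $\phi\colon N\defeq\ker\psi\into M$ the inclusion, which is not surjective since $R\neq 0$. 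Applying the exact functor $\Fr^*$ to $0\to N\to M\to[\psi]R\to 0$ identifies $\phi(\Fr^*N)$ with $\ker(\Fr^*\psi)$, and $\Fr^*\psi\circ\gamma_M=\gamma_R\circ\psi=0$; hence $\gamma_M(M)\subseteq\phi(\Fr^*N)$, so $\phi$ contradicts (b).

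The only step carrying any real content is the extraction of the order-$\leq 1$ quotient $R$ in the second implication; everything else is routine diagram-chasing with the defining commutative squares of $\gamma$--sheaf morphisms together with the exactness of $\Fr^*$, so I do not anticipate a genuine obstacle.
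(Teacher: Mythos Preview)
Your proof is correct. The (a) $\Rightarrow$ (b) direction is the same as the paper's, and for (b) $\Rightarrow$ (a) you reach the same reduction---manufacture a nonzero quotient of $M$ with vanishing structural map---but you do it in a single step by passing to $R=Q/\ker\gamma_Q^{\,n-1}$, whereas the paper argues by induction on the nilpotency order, at each stage pulling back $\ker\gamma_C$ to lower the order by one (invoking \autoref{lem.nil-iso}(c) with $i=1$); the content is identical and your version simply avoids the induction.
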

\begin{proof}
  I begin with showing the easy direction that (a) implies (b): Note that the condition $\gamma_M(M) \subseteq \phi(\Fr^*N)$ in (b) precisely says that the induced structural map on the cokernel of $N \to M$ is the zero map, thus in particular $M/\phi(N)$ is a nilpotent quotient of $M$. By assumption on $M$, $M/\phi(N)=0$ and hence $\phi(N)=M$.

  Let $M \onto C$ be such that $C$ is nilpotent. Let $N \subseteq M$ be its kernel. We have to show that $N=M$. The proof is by induction on the the order of nilpotency of $C$ (simultaneously for all $C$). If $C=M/N$ is nilpotent of order 1 this means precisely that $\gamma(M) \subseteq \Fr^*N$, hence by (b) we have $N=M$ as claimed. Now let $N$ be such that the nilpotency order of $C \defeq M/N$ is equal to $n \geq 2$. Consider the $\gamma$-submodule $N'= \pi^{-1}(\Ker \gamma_C)$ of $M$. This $N'$ clearly contains $N$ and we have that $M/N' \cong C/(\Ker \gamma_C)$. By the previous \autoref{lem.nil-iso} we conclude that the nilpotency order of $M/N'$ is $\leq n-1$. Thus by induction $N'=M$. Hence $M/N = N'/N \cong \Ker \gamma_C$ is of nilpotency order 1. Again by the base case of the induction we conclude that $M=N$.
\end{proof}
These observations immediately lead to the following corollary.
\begin{cor}\label{t.min=minroot}
  A coherent $\gamma$--sheaf $M$ is minimal if and only if the following two conditions hold.
  \begin{enumerate}
    \item The structural map of $M$ is injective.
    \item If $N \subseteq M$ is a subsheaf such that $\gamma(M) \subseteq \Fr^*N$ then $N = M$.
  \end{enumerate}
\end{cor}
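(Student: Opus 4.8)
The plan is to read this off directly from \autoref{d.minimal}, \autoref{lem.inj-char}, and \autoref{prop.char-quot}; no new ideas are needed, only the bookkeeping connecting arbitrary maps into $M$ with subsheaves of $M$.

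First I would observe that the first of the two conditions above --- injectivity of $\gamma_M$ --- is, by \autoref{lem.inj-char}, equivalent to $M$ having no nontrivial nilpotent subsheaf, which is condition~(a) of \autoref{d.minimal}. So it remains to show that the second condition above is equivalent to $M$ having no nontrivial nilpotent quotient, i.e. to condition~(b) of \autoref{d.minimal}. Here I would invoke \autoref{prop.char-quot}, which already characterizes the absence of nontrivial nilpotent quotients by the requirement that every map of $\gamma$--sheaves $\phi\colon N \to M$ with $\gamma_M(M) \subseteq \phi(\Fr^*N)$ be surjective. The second condition above is precisely the special case of this requirement in which $\phi$ is the inclusion of a $\gamma$--subsheaf $N \subseteq M$, so this gives one implication for free.

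For the converse I would reduce an arbitrary map to the subsheaf case: factor $\phi\colon N \to M$ as $N \onto \phi(N) \into M$, where $\phi(N)$ carries its natural structure as a $\gamma$--subsheaf of $M$. Since $X$ is regular, $\Fr^*$ is exact, so applying it to $N \onto \phi(N)$ and to $\phi(N) \into M$ shows that $\Fr^*\phi$ factors as $\Fr^*N \onto \Fr^*(\phi(N)) \into \Fr^*M$; hence $\phi(\Fr^*N) = \Fr^*(\phi(N))$ as subsheaves of $\Fr^*M$. Thus the hypothesis $\gamma_M(M) \subseteq \phi(\Fr^*N)$ becomes $\gamma_M(M) \subseteq \Fr^*(\phi(N))$, and applying the second condition above to the subsheaf $\phi(N)$ forces $\phi(N) = M$, i.e. $\phi$ is surjective. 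So the second condition implies the characterization of \autoref{prop.char-quot}, hence condition~(b) of \autoref{d.minimal}. Putting the two equivalences together, $M$ is minimal precisely when both conditions above hold. The only step with any content is the image factorization together with the identity $\phi(\Fr^*N) = \Fr^*(\phi(N))$, which is innocuous given the flatness of Frobenius; everything else is an unwinding of definitions.
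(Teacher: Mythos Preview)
Your proposal is correct. The paper gives no explicit proof of this corollary, simply remarking that ``these observations immediately lead to'' it; your argument is precisely the expected unwinding via \autoref{lem.inj-char} and \autoref{prop.char-quot}. One could shorten it slightly by noticing that the proof of the implication (b)$\Rightarrow$(a) in \autoref{prop.char-quot} already only ever applies its hypothesis to inclusions $N\subseteq M$ (namely the kernel of $M\onto C$), so the subsheaf condition in the corollary directly yields the absence of nontrivial nilpotent quotients without first passing through the general-map formulation; but your factorization through the image, together with the identity $\phi(\Fr^*N)=\Fr^*(\phi(N))$ coming from exactness of $\Fr^*$, is equally valid and has the virtue of making the equivalence of the two formulations explicit.
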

The conditions in the Corollary are essentially the definition of a \emph{minimal root} of a finitely generated unit $R[\Fr]$--module in \cite{Lyub}. The finitely generated unit $R[\Fr]$--module generated by $(M,\gamma)$ is of course $\Gen M$. Lyubeznik shows in the case that $R$ is a complete regular ring, that minimal roots exist. In \cite[Theorem 2.10]{Bli.int} I showed how to reduce the local case to the complete case if $R$ is $F$--finite. For convenience we give a streamlined argument of the result in the local case in the language of $\gamma$--sheaves.

\subsection{Minimal \texorpdfstring{$\gamma$}{Gamma}--sheaves over local rings}
The difficult part in establishing the existence of a minimal root is to satisfy condition (b) of \autoref{d.minimal}. The point is to bound the order of nilpotency of any nilpotent quotient of a fixed $\gamma$--sheaf $M$.
\begin{prop}\label{t.MainLocalCase}
    Let $(R,\frm)$ be regular, local and $F$--finite. Let $M$ be a coherent $\gamma$--sheaf and $N_i$ be a collection of $\gamma$--sub-sheaves which is closed under finite intersections and such that $M/N_i$ is nilpotent for all $i$. Then $M/\bigcap N_i$ is nilpotent.
\end{prop}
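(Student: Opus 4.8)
\emph{Overall strategy.} The plan is to reduce to the case of a complete ring, where the existence of minimal roots (\cite{Lyub} Theorem~3.5) is available, by base change to the $\frm$-adic completion $\widehat R$. The crux will be a uniform bound on nilpotency orders. Concretely, I first claim it is enough to produce an integer $B$ such that $M/N_i$ is nilpotent of order $\le B$ for \emph{every} $i$. Indeed, granting this, $\gamma^B_M(M)\subseteq\Fr^{B*}N_i$ for all $i$, so $\gamma^B_M(M)\subseteq\bigcap_i\Fr^{B*}N_i$; and since (next paragraph) $\bigcap_i\Fr^{B*}N_i=\Fr^{B*}\big(\bigcap_iN_i\big)$ inside $\Fr^{B*}M$, we get $\gamma^B_M(M)\subseteq\Fr^{B*}\big(\bigcap_iN_i\big)$, i.e. $M/\bigcap_iN_i$ is nilpotent of order $\le B$. (The hypothesis that the family be closed under finite intersections plays no role in this local argument; it enters in the global case.)

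\emph{$\Fr^*$ and infinite intersections.} The inclusion $\Fr^{B*}\big(\bigcap_iN_i\big)\subseteq\bigcap_i\Fr^{B*}N_i$ is immediate from exactness of $\Fr^*$. For the reverse, work affinely, $X=\Spec B$: then $\Fr^B\colon X\to X$ is $\Spec$ of a ring map which makes $B$ into a module-finite (as $X$ is $F$-finite), flat (as $X$ is regular), hence finitely presented, $B$-algebra, and $\Fr^{B*}$ is the associated extension-of-scalars functor. Extension of scalars along a finitely presented algebra commutes with arbitrary direct products; applying it to the exact sequence $0\to\bigcap_iN_i\to M\to\prod_iM/N_i$ and using exactness yields $\Fr^{B*}\big(\bigcap_iN_i\big)=\bigcap_i\Fr^{B*}N_i$.

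\emph{The uniform bound.} Completion at $\frm$ is exact, faithfully flat, and commutes with $\Fr^*$, so $\widehat M/\widehat{N_i}\cong\widehat{M/N_i}$ is nilpotent of exactly the same order as $M/N_i$; hence it suffices to bound the nilpotency orders of all nilpotent quotient $\gamma$-sheaves of the coherent $\gamma$-sheaf $\widehat M$ over the complete regular local ring $\widehat R$. For this I invoke \cite{Lyub} Theorem~3.5: $\Gen\widehat M$ has a minimal root, which---in the guise of \autoref{t.min=minroot}---amounts to the existence of a smallest $\gamma$-subsheaf $L\subseteq\overline{\widehat M}$ with $\overline{\widehat M}/L$ nilpotent; let $n_0$ be its (finite) order and let $c$ be the order of the maximal nilpotent subsheaf $\widehat M^{\circ}$ of $\widehat M$. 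Now let $N'\subseteq\widehat M$ be an arbitrary $\gamma$-subsheaf with $\widehat M/N'$ nilpotent. Its image $\overline{N'}$ in $\overline{\widehat M}=\widehat M/\widehat M^{\circ}$ has the nilpotent quotient $\overline{\widehat M}/\overline{N'}=\widehat M/(N'+\widehat M^{\circ})$, so $\overline{N'}\supseteq L$ by minimality of $L$, whence $\widehat M/(N'+\widehat M^{\circ})$ is a quotient of $\overline{\widehat M}/L$ and is nilpotent of order $\le n_0$. As $(N'+\widehat M^{\circ})/N'\cong\widehat M^{\circ}/(\widehat M^{\circ}\cap N')$ is a quotient of $\widehat M^{\circ}$, of order $\le c$, the short exact sequence
\[
0\to (N'+\widehat M^{\circ})/N'\to\widehat M/N'\to\widehat M/(N'+\widehat M^{\circ})\to 0
\]
together with \autoref{prop.nilp-abcat} and \autoref{lem.nil-iso} shows that $\widehat M/N'$ is nilpotent of order $\le n_0+c$. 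Thus $B\defeq n_0+c$ does the job.

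\emph{Where the difficulty lies.} The delicate step is the third paragraph: one has to squeeze out of Lyubeznik's existence theorem the uniform statement that \emph{every} nilpotent quotient of $\widehat M$---not just the universal one---has bounded order, and carry the maximal nilpotent subsheaf $\widehat M^{\circ}$ accurately through the bookkeeping. The commutation of $\Fr^*$ with infinite intersections in the second paragraph is elementary but genuinely used, and is where $F$-finiteness really bites: $\Fr^*$ is a left adjoint and need not commute with infinite limits, so it is the finiteness (local freeness) of $\Fr$ that saves the argument. One could also attempt a direct noetherian argument, working with the smallest $\gamma$-subsheaves $N^{(k)}$ for which $M/N^{(k)}$ is nilpotent of order $\le k$; but showing that the descending chain $N^{(1)}\supseteq N^{(2)}\supseteq\cdots$ stabilizes appears to require an input of comparable strength.
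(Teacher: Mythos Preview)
Your argument is correct, but it takes a genuinely different route from the paper's. The paper proceeds by first reducing to $\bigcap N_i=0$, then passing to the completion and to $\overline{M}$ so that $\gamma$ is injective, and finally arguing directly via the Artin--Rees Lemma and Chevalley's Theorem (in the form of \cite[Lemma~3.3]{Lyub}): if some $N_i$ lies in $\frm^sM$ with $s$ large, the injectivity of $\gamma$ together with Artin--Rees forces $N_i\subseteq\frm^{sq-t}M$, a contradiction. In particular the paper does \emph{not} invoke \cite[Theorem~3.5]{Lyub}; it essentially reproves the local existence of minimal roots in the language of $\gamma$--sheaves, which is one of the stated aims of the section (cf.\ the introduction and \autoref{r.finitelength}).

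Your approach instead treats Lyubeznik's complete-local theorem as a black box and extracts from it a uniform bound $B=n_0+c$ on the nilpotency order of \emph{every} nilpotent quotient of $\widehat M$, then pushes this bound back down to $R$. This is shorter and has a pleasant by-product: as you observe, once one has a uniform bound the hypothesis that the family be closed under finite intersections is not needed---whereas in the paper's argument this hypothesis is genuinely used, since Chevalley's Theorem requires a directed system. The price you pay is loss of self-containment: Lyubeznik's Theorem~3.5 is itself proved via the same Artin--Rees/Chevalley mechanism, so you are importing exactly the machinery the paper chose to redevelop from scratch. Both the commutation of $\Fr^{B*}$ with infinite intersections and the additivity of nilpotency orders in extensions are handled correctly in your write-up.
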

\begin{proof}
    Since $R$ is regular, local and $F$--finite, $R$ is via $\Fr$ a free $R$--module of finite rank. Hence $\Fr^*$ is nothing but tensorisation with a free module of finite rank. Such an operation commutes with the formation of inverse limits such that $\Fr^* \bigcap N_i = \bigcap(\Fr^*N_i)$ and hence $\bigcap N_i$ is a $\gamma$--subsheaf of $M$. Clearly we may replace $M$ by $M/\bigcap N_i$ such that we have $\bigcap N_i= 0$. By faithfully flatness of completion $M$ is nilpotent if and only if $\hat{R} \tensor_R M$ is a nilpotent $\gamma$--sheaf over $\hat{R}$ (and similar for all $M/N_i$). Hence we may assume that $(R,\frm)$ is complete. We may further replace $M$ by its image $\overline{M}$ in $\Gen M$. Thus we may assume that $M$ has injective structural map $\gamma: M \subseteq \Fr^*M$. We have to show that $M=0$.

    By the Artin-Rees Lemma (applied to $M \subseteq \Fr^*M$) there exists $t \geq 0$ such that for all $s > t$
    \[
        M \cap \frm^s\Fr^*M \subseteq \frm^{s-t}(M \cap \frm^t\Fr^*M) \subseteq \frm^{s-t}M\; .
    \]
    By Chevalleys's Theorem in the version of \cite[Lemma 3.3]{Lyub}, for some $s \gg 0$ (in fact $s \geq t+1$ will suffice) we find $N_i$ with $N_i \subseteq \frm^s M$. Possibly increasing $s$ we may assume that $N_i \not\subseteq \frm^{s+1} M$ (unless, of course $N_i=0$ in which case $M/N_i = M$ is nilpotent $\Rightarrow$ $M=0$ since $\gamma_M$ is injective, and we are done). Combining these inclusions we get
    \[
    \begin{split}
        N_i \subseteq \Fr^*N_i \cap M &\subseteq \Fr^*(\frm^sM) \cap M \\
            &\subseteq (\frm^s)^{[q]}\Fr^*M \cap M \subseteq \frm^{sq}\Fr^*M \cap M \\
            &\subseteq \frm^{sq-t}M \; .
    \end{split}
    \]
    But since $sq-t \geq s+1$ for our choice of $s \geq t+1$ this is a contradiction (to the assumption $N_i \neq 0$) and the result follows.
\end{proof}
\begin{cor}\label{t.MainLocal}
    Let $R$ be regular, local and $F$--finite and $M$ a coherent $\gamma$--sheaf. Then $M$ has a nil-isomorphic subsheaf without non-zero nilpotent quotients (i.e. satisfying (b) of the definition of minimality). In particular, $M$ is nil-isomorphic to a minimal $\gamma$--sheaf.
\end{cor}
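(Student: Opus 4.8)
The plan is to produce the desired subsheaf as the intersection of \emph{all} $\gamma$-subsheaves of $M$ having nilpotent quotient, and then to read off the two conditions of minimality by bookkeeping with the abelian structure of $\gamma$-sheaves; the one substantial input is \autoref{t.MainLocalCase}.

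First I would introduce the set $\mathcal S$ of $\gamma$-subsheaves $N'\subseteq M$ for which $M/N'$ is nilpotent. It is non-empty, as $M\in\mathcal S$, and it is closed under finite intersections: if $M/N_1$ and $M/N_2$ are nilpotent of orders $n_1,n_2$, then the canonical map $M/(N_1\cap N_2)\to M/N_1\oplus M/N_2$ is an injection of $\gamma$-sheaves (here one uses exactness of $\Fr^*$ to see $N_1\cap N_2$ is a $\gamma$-subsheaf), and the target is nilpotent of order $\max(n_1,n_2)$; since the nilpotent coherent $\gamma$-sheaves form an abelian subcategory (\autoref{prop.nilp-abcat}) they are closed under subobjects, so $M/(N_1\cap N_2)$ is nilpotent. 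Hence \autoref{t.MainLocalCase} applies to the family $\mathcal S$ and shows that $N\defeq\bigcap_{N'\in\mathcal S}N'$ is a $\gamma$-subsheaf of $M$ with $M/N$ nilpotent; in particular the inclusion $N\into M$ is a nil-isomorphism.

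Next I would check that $N$ has no non-zero nilpotent quotient, i.e. satisfies (b) of \autoref{d.minimal}. Suppose $N\onto C$ with $C$ nilpotent, and set $N''\defeq\Ker(N\onto C)$, a $\gamma$-subsheaf of $M$ contained in $N$. The short exact sequence $0\to N/N''\to M/N''\to M/N\to 0$ exhibits $M/N''$ as an extension of $C\cong N/N''$ by $M/N$, both nilpotent; since nilpotent $\gamma$-sheaves are closed under extension (\autoref{prop.nilp-abcat}), $M/N''$ is nilpotent, so $N''\in\mathcal S$. By minimality of $N$ this forces $N\subseteq N''$, whence $N''=N$ and $C=0$. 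Thus $N$ is the sought-after nil-isomorphic subsheaf satisfying (b).

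Finally, for the ``in particular'' statement I would pass to the image $\overline N$ of $N$ in $\Gen N$. As $N$ is coherent, $N\onto\overline N$ is a nil-isomorphism, so $\overline N$ is nil-isomorphic to $N$, hence to $M$. Being a quotient of $N$, the sheaf $\overline N$ still satisfies (b) by \autoref{lem.inj-sub.sur-quot}, and its structural map is injective, so it satisfies (a) by \autoref{lem.inj-char}; therefore $\overline N$ is minimal and nil-isomorphic to $M$. The only non-formal ingredient in all of this is \autoref{t.MainLocalCase}, whose Artin--Rees/Chevalley argument bounds the nilpotency orders and thereby makes the intersection $N$ well behaved; once that is granted, the corollary is pure diagram chasing.
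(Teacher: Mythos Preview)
Your proof is correct and follows essentially the same route as the paper: intersect all nil-isomorphic $\gamma$-subsheaves, invoke \autoref{t.MainLocalCase} to see the intersection is again nil-isomorphic, and deduce condition~(b) from extension-closedness of nilpotents. The only cosmetic differences are that you verify closure under finite intersections via the embedding into the direct sum rather than via \autoref{t.lemma.basic}, and you achieve condition~(a) afterwards by passing to $\overline{N}$ (using that (b) passes to quotients) whereas the paper first replaces $M$ by $\overline{M}$ and then takes the intersection (using that (a) passes to subsheaves).
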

\begin{proof}
    Let $N_i$ be the collection of all nil-isomorphic subsheaves of $M$. This collection is closed under finite intersection: If $N$ and $N'$ are two such, then \autoref{t.lemma.basic} shows that $N\cap N'$ is a nil-isomorphic subsheaf of $N$. Since composition of nil-isomorphisms are nil-isomorphisms it follows that $N \cap N' \subseteq M$ is a nil-isomorphism as well.

    Since $M$ is coherent each $M/N_i$ is indeed nilpotent such that we can apply \autoref{t.MainLocalCase} to conclude that $M/ \bigcap N_i$ is nilpotent. Hence $N \defeq \bigcap N_i$ is the unique smallest nil-isomorphic subsheaf of $M$. It is clear that $N$ cannot have non-zero nilpotent quotients (since the kernel would be a strict subsheaf of $N$, nil-isomorphic to $M$, by \autoref{prop.nilp-abcat} (b)).

    By first replacing $M$ by $\overline{M}$ we can also achieve that condition (a) of the definition of minimality holds. As condition (a) passes to subsheaves, the smallest nil-isomorphic subsheaf of $\overline{M}$ is the sought after minimal $\gamma$--sheaf which is nil-isomorphic to $M$.
\end{proof}
\begin{rem}\label{r.finitelength}
    Essentially the same argument as in the proof of \autoref{t.MainLocalCase} shows the following: If $R$ is local and $M$ is a coherent $\gamma$--sheaf over $R$ with injective structural map, then any descending chain of $\gamma$--submodules of $M$ stabilizes. This was shown (with essentially the same argument) in \cite{Lyub} and implies immediately that $\gamma$--sheaves with injective structural map satisfy DCC.
\end{rem}

If one tries to reduce the general case of \autoref{t.MainLocal} (i.e. $R$ not local) to the local case just proven one encounters the problem of having to deal with the behavior of the infinite intersection $\bigcap N_i$ under localization. This is a source of troubles I do not know how to deal with directly. The solution to this is to take a detour and realize this intersection in a fashion such that each term functorially depends on $M$ and furthermore that this functorial construction commutes with localization. This is explained in the following section.

\subsection{\texorpdfstring{$D^{(1)}_X$}{D}--modules and Frobenius descent}

Let $D_X$ denote the sheaf of differential operators on $X$. This is a sheaf of rings on $X$ which locally, on each affine subvariety $\Spec R$ is described as follows.
\[
    D_R = \bigcup_{i=0}^\infty D^{(i)}_R
\]
where $D^{(i)}_R$ is the subset of $\End_{\mathbb{F}_q}(R)$ consisting of the operators which are linear over $R^{q^i}$, the subring of $(q^i)^{\text{th}}$ powers of elements of $R$. In particular $D^{(0)}_R \cong R$ and $D^{(1)}_R = \End_{R^{q}}(R)$. Clearly, $R$ itself becomes naturally a left $D^{(i)}_R$--module. Now denote by $R^{(1)}$ the $D^{(1)}_R$--$R$--bi-module which has this left $D^{(1)}_R$--module structure and the right $R$--module structure via Frobenius. i.e. for $r \in R^{(1)}$ and $x \in R$ we have $r \cdot x = rx^q$. With this notation we may view $D^{(1)}_R=\End^{\r}_R(R^{(1)})$ as the right $R$-linear endomorphisms of $R^{(1)}$. Thus we have
\[
    \Fr^*(\usc)=R^{(1)} \tensor_R \usc : R-\text{mod} \to D^{(1)}_R-\text{mod}
\]
which makes $\Fr^*$ into an equivalence of categories from $R$--modules to $D^{(1)}_R$--modules (because, since $\Fr$ is flat and $R$ is $F$--finite, $R^{(1)}$ is a locally free right $R$--module of finite rank). Its inverse functor is given by
\begin{equation}\label{eq:Fdesc}
    \Fr^{-1}(\usc)=\Hom^{\r}_R(R^{(1)},R) \tensor_{D^{(1)}_R} \usc: D^{(1)}_R-{\text{mod}} \to R-\text{mod}
\end{equation}
For details see \cite[Section 2.2]{AlvBliLyub.GeneratorsDmod}. I want to point out that these constructions commute with localization at arbitrary multiplicative sets. Let $S$ be a multiplicative set of $R$.\footnote{Since $S^{-1}R = (S^{[q]})^{-1}R$ we may assume that $S \subseteq R^q$. This implies that $S$ is in the center of $D^{(1)}_R$ such that localization in this non-commutative ring along $S$ is harmless. With this I mean that we may view the localization of the left $R$--module $D^{(1)}_R$ at $S^{-1}$ in fact as the localization of $D^{(1)}_R$ at the central multiplicative set $(S^{[q]})^{-1}$}. We have
\begin{equation}\label{eq:D-local}
\begin{split}
    S^{-1}D^{(1)}_R &= S^{-1}\End^{\r}_R(R^{(1)}) \\
                    &= \End^{\r}_{S^{-1}R}((S^{[q]})^{-1}R^{(1)}) =
    \End^{\r}_{S^{-1}R}((S^{-1}R)^{(1)}) \\
                    &= D^{(1)}_{S^{-1}R}
\end{split}
\end{equation}
Furthermore we have for an $D^{(1)}_R$--module $M$:
\[
\begin{split}
    S^{-1}(\sigma^{-1} M) &= S^{-1}(\Hom^{\r}_R(R^{(1)},R) \tensor_{D^{(1)}_R} M) \\
        &= S^{-1}\Hom^{\r}_R(R^{(1)},R) \tensor_{S^{-1}D^{(1)}_R} S^{-1}M \\
        &= \Hom^{\r}_{S^{-1}R}((S^{-1}R)^{(1)},S^{-1}R) \tensor_{D^{(1)}_{S^{-1}R}} S^{-1}M \\
        &= \sigma^{-1}(S^{-1}M)
\end{split}
\]
These observations are summarized in the following Proposition
\begin{prop}\label{t.commuteslocal}
Let $X$ be $F$--finite and regular. Let $U$ be an open subset (more generally, $U$ is locally given on $\operatorname{Spec} R$ as $\operatorname{Spec} S^{-1}R$ for some (sheaf of) multiplicative sets on $X$). Then
\[
    (D^{(1)}_X)|_U = D^{(1)}_U
\]
and for any sheaf of $D^{(1)}_X$--modules $M$ one has that
\[
    (\sigma^{-1}M)|_U = (\Hom^{\r}(\CO_X^{(1)},\CO_X) \tensor_{D^{(1)}_X} M)|_U \cong \Hom^{\r}(\CO_U^{(1)},\CO_U) \tensor_{D^{(1)}_U} M|_U = \sigma^{-1}(M|_U)
\]
as $\CO_U$--modules.
\end{prop}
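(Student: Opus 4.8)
The plan is to reduce both assertions to the affine, localized situation, where they are precisely the computations already carried out in \eqref{eq:D-local} and in the display immediately following it. Since $D^{(1)}_X$ and the functor $\sigma^{-1}$ are defined locally, and since the restriction of a sheaf to an open subset is determined by its sections on a basis of opens, it suffices to verify the two isomorphisms on affine opens of the form $\Spec S^{-1}R \subseteq \Spec R$, with $R$ an $F$--finite regular ring and $S \subseteq R$ a multiplicative set.

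First I would record the observation of the footnote: because $S^{-1}R = (S^{[q]})^{-1}R$ we may replace $S$ by $S^{[q]}$ and so assume $S \subseteq R^q$. Then $S$ lies in the centre of $D^{(1)}_R = \End^{\r}_R(R^{(1)})$, so localizing the ring (or the left $R$--module) $D^{(1)}_R$ at $S$ is localization at a central multiplicative set; no Ore condition is involved and the usual exactness and base--change properties of localization apply without change.

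With this in place, $(D^{(1)}_X)|_U = D^{(1)}_U$ is exactly \eqref{eq:D-local}: the only input is that, $X$ being $F$--finite and regular, $R^{(1)}$ is a locally free right $R$--module of finite rank, so that $\End^{\r}_R(\usc)$ commutes with localization. For the second identity one substitutes the explicit description \eqref{eq:Fdesc} of $\sigma^{-1}$ into $S^{-1}(\sigma^{-1}M)$, uses that $\usc \tensor_{D^{(1)}_R} M$ commutes with the (central) localization, uses again the finite presentation of $R^{(1)}$ to see that $\Hom^{\r}_R(R^{(1)},R)$ localizes correctly, and finally feeds in $S^{-1}D^{(1)}_R = D^{(1)}_{S^{-1}R}$ from the first part; this is the unlabeled display preceding the statement. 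All of these are the canonical comparison maps and are compatible with further localization, so they glue to the asserted isomorphism of sheaves of $\CO_U$--modules.

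There is no genuinely difficult step: the proposition is a bookkeeping statement recording that Frobenius descent is a local construction. The only point needing a moment's care --- and the reason $F$--finiteness and regularity are invoked --- is that the $\Hom$ and $\tensor$ functors entering the definition of $\sigma^{-1}$ must commute with localization, which reduces to local freeness of finite rank of $R^{(1)}$ over $R$ together with the centrality trick that makes noncommutative localization in $D^{(1)}_R$ harmless.
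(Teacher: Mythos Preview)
Your proposal is correct and follows exactly the paper's own approach: the proposition is stated as a summary of the two computations \eqref{eq:D-local} and the unlabeled display after it, and you have correctly identified that the affine reduction together with those computations (plus the centrality trick and local freeness of $R^{(1)}$) is all that is needed. There is nothing to add.
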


\subsection{A criterion for minimality}

The Frobenius descent functor $\Fr^{-1}$ can be used to define an operation on $\gamma$--sheaves which assigns to a $\gamma$--sheaf $M$ its smallest $\gamma$--subsheaf $N$ with the property that $M/N$ has the trivial (=0) $\gamma$--structure. This is the opposite of what the functor $\Fr^*$ does: $\gamma: M \to \Fr^*M$ is a map of $\gamma$ sheaves such that $\Fr^*M/\gamma(M)$ has trivial $\gamma$--structure.

We define the functor $\FrG^{-1}$ from $\gamma$--sheaves to $\gamma$--sheaves as follows. Let $M \to[\gamma] \Fr^*M$ be a $\gamma$ sheaf. Then $\gamma(M)$ is an $\CO_X$--submodule of the $D^{(1)}_X$--module $\Fr^*M$. Denote by $D^{(1)}_X\gamma(M)$ the $D^{(1)}_X$--submodule of $\Fr^*M$ generated by $\gamma(M)$. To this inclusion of $D^{(1)}_X$--modules
\[
    D^{(1)}_X\gamma(M) \subseteq \Fr^*M
\]
we apply the Frobenius descent functor $\Fr^{-1}: D^{(1)}_X\text{--mod} \to \CO_X\text{--mod}$ defined above in \autoref{eq:Fdesc} and use that $\Fr^{-1} \circ \Fr^* = \id$ to define
\[
    \FrG^{-1}M \defeq \Fr^{-1}(D^{(1)}_X\gamma(M)) \subseteq \Fr^{-1}\Fr^*M = M
\]
In general one has $\FrG^{-1}(\Fr^*M) = \Fr^{-1}D^{(1)}_X\Fr^*(\gamma)(\Fr^*M) = \gamma(M)$ since $\Fr^*(\gamma)(\Fr^*M)$ already is a $D^{(1)}_X$--subsheaf of the $D^{(2)}_X$--module $\Fr^*(\Fr^*M)=\Fr^{2*}M$.

By construction $\FrG^{-1}M \subseteq M \to[\gamma] \gamma(M) \subseteq D^{(1)}_X \gamma(M) = \Fr^*\Fr^{-1}D^{(1)}_X \gamma(M) = \Fr^* \FrG^{-1} M$ such that $\FrG^{-1}M$ is a $\gamma$--subsheaf of $M$.

Furthermore, the quotient $M/\FrG^{-1}M$ has zero structural map. One makes the following observation
\begin{lem}\label{t.Mminimal}
    Let $M$ be a $\gamma$ sheaf. Then $\FrG^{-1}M$ is the smallest subsheaf $N$ of $M$ such that $\Fr^*N \supseteq \gamma(M)$.
\end{lem}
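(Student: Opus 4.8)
The plan is to prove the two required properties of $\FrG^{-1}M$ separately: first that $N \defeq \FrG^{-1}M$ itself satisfies $\Fr^*N \supseteq \gamma(M)$, and then that it is contained in every subsheaf with this property. The first point has essentially been checked already in the paragraph preceding the statement: by construction $\FrG^{-1}M = \Fr^{-1}(D^{(1)}_X\gamma(M))$, and applying $\Fr^*$ and using the Frobenius-descent identity $\Fr^*\Fr^{-1} = \id$ on $D^{(1)}_X$--modules gives $\Fr^*\FrG^{-1}M = D^{(1)}_X\gamma(M) \supseteq \gamma(M)$. So I would open the proof by recalling this computation in one line.

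For the minimality, suppose $N' \subseteq M$ is any $\CO_X$--submodule with $\Fr^*N' \supseteq \gamma(M)$. The key observation is that $\Fr^*N'$ is not merely an $\CO_X$--submodule but a $D^{(1)}_X$--submodule of $\Fr^*M$: this is exactly the content of Frobenius descent --- every object in the image of $\Fr^*$ carries a canonical $D^{(1)}_X$--structure, and $\Fr^*$ applied to the inclusion $N' \hookrightarrow M$ is a map of $D^{(1)}_X$--modules, so $\Fr^*N'$ is a $D^{(1)}_X$--submodule. Since $\Fr^*N'$ contains $\gamma(M)$ and is closed under the $D^{(1)}_X$--action, it must contain the $D^{(1)}_X$--submodule \emph{generated} by $\gamma(M)$, namely $D^{(1)}_X\gamma(M)$. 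Thus $D^{(1)}_X\gamma(M) \subseteq \Fr^*N'$ inside $\Fr^*M$. Now apply the exact functor $\Fr^{-1}$ to this inclusion of $D^{(1)}_X$--submodules of $\Fr^*M$; using $\Fr^{-1}\Fr^* = \id$ again we obtain $\FrG^{-1}M = \Fr^{-1}(D^{(1)}_X\gamma(M)) \subseteq \Fr^{-1}(\Fr^*N') = N'$, which is the desired inclusion.

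I expect the main (and only) delicate point to be the assertion that $\Fr^*N'$ is a $D^{(1)}_X$--submodule of $\Fr^*M$, i.e.\ that the $D^{(1)}_X$--structure on $\Fr^*M$ restricts to one on $\Fr^*N'$. This is immediate from the description of $\Fr^*$ as $R^{(1)}\tensor_R \usc$ together with functoriality: the inclusion $N' \hookrightarrow M$ of $R$--modules becomes, after applying $R^{(1)}\tensor_R\usc$, a morphism of $D^{(1)}_R$--modules, and it remains injective because $R^{(1)}$ is flat (indeed locally free) as a right $R$--module by $F$--finiteness and regularity. One should also note that $\Fr^{-1}$ preserves inclusions of $D^{(1)}_X$--modules for the same flatness reason; both facts have been recorded in the discussion of \autoref{eq:Fdesc} and in \autoref{t.commuteslocal}. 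Apart from these bookkeeping remarks the argument is a two-step "generate, then apply the adjoint" manipulation and requires no further computation.
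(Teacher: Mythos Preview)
Your argument is correct and is essentially identical to the paper's proof: both verify that $\Fr^*\FrG^{-1}M = D^{(1)}_X\gamma(M) \supseteq \gamma(M)$ by construction, then for any $N'$ with $\Fr^*N' \supseteq \gamma(M)$ use that $\Fr^*N'$ is a $D^{(1)}_X$--submodule to get $D^{(1)}_X\gamma(M) \subseteq \Fr^*N'$ and apply $\Fr^{-1}$. Your additional remarks justifying why $\Fr^*N'$ is a $D^{(1)}_X$--submodule and why $\Fr^{-1}$ preserves inclusions are helpful elaborations, but the proof strategy is the same as the paper's.
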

\begin{proof}
    Clearly $\Fr^{-1}M$ satisfies this condition. Let $N$ be as in the statement of the Lemma. Then $\Fr^*N$ is a $D^{(1)}_X$--subsheaf of $\Fr^*M$ containing $\gamma(M)$. Hence $D^{(1)}_X \gamma(M) \subseteq \Fr^*N$. Applying $\Fr^{-1}$ we see that $\sigma^{-1}M \subseteq N$.
\end{proof}
Therefore, the result of the lemma could serve as an alternative definition of $\sigma^{-1}_\gamma$ (one would have to show that the intersection of all such $N$ has again the property that $\gamma(M) \subseteq \Fr^*\bigcap N$ but this follows since $\Fr^*$ commutes with inverse limits). The following lemma is the key point in our reduction to the local case. It is an immediate consequence of \autoref{t.commuteslocal}. Nevertheless we include here a proof using only the characterization of \autoref{t.Mminimal}. Hence one may avoid the appearance of $D^{(1)}$--modules in this paper altogether but I believe it to be important to explain where the ideas for the arguments originated, hence $D^{(1)}$--modules are still there.
\begin{lem}\label{t:FrG-1_commutes_local}
    Let $M$ be a $\gamma$ sheaf and let $S \subseteq \CO_X$ be multiplicative set. Then $S^{-1}(\FrG^{-1}M) = \FrG^{-1}(S^{-1}M)$.
\end{lem}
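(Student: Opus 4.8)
The strategy is to use only the universal characterization of $\FrG^{-1}$ from \autoref{t.Mminimal}: for any $\gamma$--sheaf $P$, the subsheaf $\FrG^{-1}P$ is the \emph{smallest} $\CO_X$--submodule $N \subseteq P$ with $\Fr^*N \supseteq \gamma_P(P)$. Reducing as usual to the affine case $X = \Spec R$, I will use two standard facts already exploited in the proof of \autoref{t.MainLocalCase}: that $\Fr$ is flat, so $\Fr^*$ is exact and preserves kernels, and that $\Fr^*$ commutes with localization, $\Fr^*(S^{-1}M) = S^{-1}(\Fr^*M)$. I will also use that the $\gamma$--structure on $S^{-1}M$ is $S^{-1}\gamma_M$, so that the localization map $\ell \colon \Fr^*M \to \Fr^*(S^{-1}M)$ sends $\gamma_M(m)$ to $\gamma_{S^{-1}M}(m/1)$ and hence carries $\gamma_M(M)$ into $\gamma_{S^{-1}M}(S^{-1}M)$.

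For the inclusion $\FrG^{-1}(S^{-1}M) \subseteq S^{-1}(\FrG^{-1}M)$: applying $\Fr^*$ to $\FrG^{-1}M \subseteq M$, localizing, and using $\Fr^*(\FrG^{-1}M) \supseteq \gamma_M(M)$ from \autoref{t.Mminimal} gives $\Fr^*\bigl(S^{-1}(\FrG^{-1}M)\bigr) = S^{-1}\Fr^*(\FrG^{-1}M) \supseteq S^{-1}\gamma_M(M) = \gamma_{S^{-1}M}(S^{-1}M)$. Thus $S^{-1}(\FrG^{-1}M)$ is a submodule of $S^{-1}M$ with the defining property of $\FrG^{-1}(S^{-1}M)$, and minimality yields the inclusion.

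The reverse inclusion is where one must be a little careful, and the plan is to pull $\FrG^{-1}(S^{-1}M)$ back to $M$. Let $N' \subseteq M$ be the preimage of $\FrG^{-1}(S^{-1}M)$ under $M \to S^{-1}M$. A one-line check ($m/s \in \FrG^{-1}(S^{-1}M)$ implies $m/1 \in \FrG^{-1}(S^{-1}M)$, i.e.\ $m \in N'$) shows $S^{-1}N' = \FrG^{-1}(S^{-1}M)$. Applying the exact functor $\Fr^*$ to $0 \to N' \to M \to S^{-1}M/\FrG^{-1}(S^{-1}M)$ and using $\Fr^* S^{-1} = S^{-1}\Fr^*$ identifies $\Fr^*N'$ with $\ell^{-1}\bigl(\Fr^*\FrG^{-1}(S^{-1}M)\bigr) \subseteq \Fr^*M$. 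Since $\ell(\gamma_M(m)) = \gamma_{S^{-1}M}(m/1) \in \gamma_{S^{-1}M}(S^{-1}M) \subseteq \Fr^*\FrG^{-1}(S^{-1}M)$ for every $m \in M$ (the last step again by \autoref{t.Mminimal}), we get $\gamma_M(M) \subseteq \Fr^*N'$. Minimality of $\FrG^{-1}M$ then forces $\FrG^{-1}M \subseteq N'$, and localizing gives $S^{-1}(\FrG^{-1}M) \subseteq S^{-1}N' = \FrG^{-1}(S^{-1}M)$, which completes the proof.

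The main --- essentially the only --- obstacle is the bookkeeping: checking that $\Fr^*$ genuinely commutes with localization and with the formation of the kernel and preimage used above. All of this is formal, following from flatness of $\Fr$ and exactness of localization, so no real difficulty arises. Conceptually the statement is just the observation that the universal property in \autoref{t.Mminimal} is visibly local; of course it also follows immediately from \autoref{t.commuteslocal}, but the argument above has the advantage of not mentioning $D^{(1)}_X$--modules.
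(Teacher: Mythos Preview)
Your proof is correct and follows essentially the same route as the paper's: both arguments establish the two inclusions via the minimality characterization of \autoref{t.Mminimal}, and your submodule $N'$ (the preimage of $\FrG^{-1}(S^{-1}M)$ in $M$) is precisely what the paper denotes $M \cap \FrG^{-1}(S^{-1}M)$. Your version is in fact a bit more explicit about the bookkeeping (e.g.\ verifying $S^{-1}N' = \FrG^{-1}(S^{-1}M)$ and identifying $\Fr^*N'$ as a preimage via exactness of $\Fr^*$), and your closing remark that the argument avoids $D^{(1)}_X$--modules mirrors exactly the point the paper makes.
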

\begin{proof}
    This follows from \autoref{t.commuteslocal}. However, this can also be proven using only the characterization in \autoref{t.Mminimal}: By this we have
    \begin{equation}\label{eq.loc1}
        \Fr^*(S^{-1}(\FrG^{-1}M)) = S^{-1}(\Fr^*(\FrG^{-1})) \supseteq S^{-1}\gamma(M) = \gamma(S^{-1}M)
    \end{equation}
    which implies that $\FrG^{-1}(S^{-1}M) \subseteq S^{-1}(\FrG^{-1}M)$ because $\FrG^{-1}(S^{-1}M)$ is smallest (by \autoref{t.Mminimal}) with respect to the inclusion shown in the displayed equation \autoref{eq.loc1}. On the other hand one has the chain of inclusions
    \[
    \begin{split}
        \Fr^*(M\cap S^{-1}\FrG^{-1}(M)) &=\Fr^*M \cap \Fr^*\FrG^{-1}(S^{-1}M) \\
                                        &\supseteq \Fr^*M \cap \gamma(S^{-1}M) \supseteq \gamma(M)
    \end{split}
    \]
    and hence \autoref{t.Mminimal} applied to $M$ yields
    \[
        \FrG^{-1}M \subseteq M \cap S^{-1}\FrG^{-1}(M)\, .
    \]
    Therefore $S^{-1}\FrG^{-1}M \subseteq S^{-1}M \cap S^{-1}(\FrG^{-1}S^{-1}M) = \FrG^{-1}S^{-1}M$ which finishes the argument.
\end{proof}

\begin{prop}\label{t.Minimal=Stable-Open}
    Let $M$ be a $\gamma$-sheaf. Then $\FrG^{-1}M = M$ if and only if $M$ has no proper nilpotent quotients (i.e. satisfies condition (b) of the definition of minimality)

    If $M$ is coherent. The condition on $x \in X$ that the inclusion $\FrG^{-1}(M_x) \subseteq M_x$ is equality is an open condition on $X$.
\end{prop}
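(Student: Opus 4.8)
The plan is to handle the two assertions in turn. The first is essentially formal given \autoref{t.Mminimal}, and the second reduces to the fact that the support of a coherent sheaf is closed, the crucial input being \autoref{t:FrG-1_commutes_local}.

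For the equivalence between $\FrG^{-1}M = M$ and the condition that $M$ has no proper nilpotent quotient, one direction is immediate: $M/\FrG^{-1}M$ has zero structural map and is therefore nilpotent of order $\le 1$, so the absence of nonzero nilpotent quotients forces $\FrG^{-1}M = M$. For the converse I would assume $\FrG^{-1}M = M$, take a nilpotent quotient $\pi \colon M \onto C$ with kernel $N$, and show $N = M$ by induction on the nilpotency order $n$ of $C$. If $n \le 1$ then $\gamma(M) \subseteq \Fr^*N$, and \autoref{t.Mminimal} gives $M = \FrG^{-1}M \subseteq N$. If $n \ge 2$, I would pass to $N' = \pi^{-1}(\Ker\gamma_C) \supseteq N$: by \autoref{lem.nil-iso}~(c) the quotient $M/N' \cong C/\Ker\gamma_C$ has nilpotency order $\le n-1$, hence $N' = M$ by induction, and then $M/N \cong \Ker\gamma_C$ has zero structural map, so the base case applies. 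When $M$ is coherent one may instead simply combine \autoref{prop.char-quot} with \autoref{t.Mminimal}. The only mildly delicate step here is this reduction of an arbitrary nilpotent quotient to one of order $1$, which is precisely the content of \autoref{lem.nil-iso}~(c).

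For the openness statement the key point is that \autoref{t:FrG-1_commutes_local}, applied locally with $S$ the complement of the prime corresponding to $x$, yields $\FrG^{-1}(M_x) = (\FrG^{-1}M)_x$ for every $x \in X$. Hence the locus in question is
\[
\{\, x \in X : \FrG^{-1}(M_x) = M_x \,\} \;=\; \{\, x \in X : (M/\FrG^{-1}M)_x = 0 \,\} \;=\; X \setminus \operatorname{Supp}(M/\FrG^{-1}M).
\]
Since $M$ is coherent and $X$ is noetherian, $M/\FrG^{-1}M$ is a coherent $\CO_X$--module, being a quotient of $M$, so its support is closed and the locus above is open. No genuine obstacle remains in this second part: all of the real work was done in \autoref{t:FrG-1_commutes_local}, which was arranged precisely so that $\FrG^{-1}$ commutes with localization, and once that is available openness is a formality.
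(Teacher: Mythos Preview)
Your proof is correct and follows essentially the same approach as the paper. For the first part the paper simply invokes \autoref{prop.char-quot} (verifying its condition (b) via \autoref{t.Mminimal}), whereas you unfold the induction on the nilpotency order explicitly; since \autoref{prop.char-quot} is stated only for coherent $\gamma$--sheaves, your direct argument is slightly cleaner for the general statement, but the content is identical. For the second part both proofs are the same: use \autoref{t:FrG-1_commutes_local} to identify $\FrG^{-1}(M_x)$ with $(\FrG^{-1}M)_x$ and then appeal to coherence.
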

\begin{proof}
    One direction is clear since $M/\FrG^{-1}M$ is a nilpotent quotient of $M$.
    We use the characterization in \autoref{prop.char-quot}. For this let $N \subseteq M$ be such that $\gamma(M) \subseteq \Fr^*N$. $\FrG^{-1}M$ was the smallest subsheaf with this property, hence $\FrG^{-1}M \subseteq N \subseteq M$. Since $M = \FrG^{-1}M$ by assumption it follows that $N=M$. Hence, by \autoref{prop.char-quot}, $M$ does not have non-trivial nilpotent quotients.

    By \autoref{t:FrG-1_commutes_local} $\FrG^{-1}$ commutes with localization which means that $\FrG^{-1}(M_x)=(\FrG^{-1}M)_x$. Hence the second statement follows simply since both $M$ and $\FrG^{-1}M$ are coherent (and equality of two coherent modules via a given map is an open condition).
\end{proof}
\begin{lem}\label{t.end-exact}
    The assignment $M \mapsto \FrG^{-1} M$ is an end-exact functor on $\gamma$--sheaves.
\end{lem}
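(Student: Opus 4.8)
The claim is that $M \mapsto \FrG^{-1}M$ is end-exact, i.e.\ it preserves exactness at the right end of short exact sequences. So, starting from a short exact sequence of $\gamma$-sheaves
\[
    0 \to M' \to M \to M'' \to 0 ,
\]
I would show that $\FrG^{-1}M' \to \FrG^{-1}M \to \FrG^{-1}M'' \to 0$ is exact, i.e.\ surjectivity on the right and exactness in the middle, but \emph{not} necessarily injectivity of $\FrG^{-1}M' \to \FrG^{-1}M$. (This mirrors the situation for $M \mapsto \overline M$ discussed just before \autoref{lem.inj-char}.) Since $\FrG^{-1}$ is clearly a functor — a map $M \to N$ carries $\gamma_M(M)$ into $\gamma_N(N)$, hence $D^{(1)}_X\gamma_M(M)$ into $D^{(1)}_X\gamma_N(N)$, and then applying $\Fr^{-1}$ gives a map $\FrG^{-1}M \to \FrG^{-1}N$, compatible with the inclusions into $M$ and $N$ — the content is just the exactness assertion. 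By \autoref{t:FrG-1_commutes_local} the construction commutes with localization, so it suffices to check exactness on stalks, i.e.\ I may assume $X = \Spec R$ with $R$ local, or even just argue module-theoretically.

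The cleanest route is to use the characterization of \autoref{t.Mminimal}: $\FrG^{-1}M$ is the \emph{smallest} subsheaf $N \subseteq M$ with $\Fr^*N \supseteq \gamma_M(M)$. For right-exactness, let $\pi\colon M \onto M''$ be the surjection. I claim $\pi(\FrG^{-1}M) = \FrG^{-1}M''$. The inclusion $\supseteq$ follows from minimality of $\FrG^{-1}M''$: $\Fr^*$ is exact so $\Fr^*(\pi(\FrG^{-1}M)) = (\Fr^*\pi)(\Fr^*\FrG^{-1}M) \supseteq (\Fr^*\pi)(\gamma_M(M)) = \gamma_{M''}(\pi(M)) = \gamma_{M''}(M'')$, and $\FrG^{-1}M''$ was smallest with this property. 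For the reverse inclusion $\pi(\FrG^{-1}M) \subseteq \FrG^{-1}M''$, observe that $\pi^{-1}(\FrG^{-1}M'')$ is a subsheaf $N$ of $M$ with $\Fr^*N \supseteq \gamma_M(M)$: indeed $\Fr^*\pi^{-1}(\FrG^{-1}M'') = (\Fr^*\pi)^{-1}(\Fr^*\FrG^{-1}M'') \supseteq (\Fr^*\pi)^{-1}(\gamma_{M''}(M''))$, and $\gamma_M(M)$ lands in the latter because $(\Fr^*\pi)(\gamma_M(M)) = \gamma_{M''}(M'') \subseteq \Fr^*\FrG^{-1}M''$; so by \autoref{t.Mminimal} applied to $M$, $\FrG^{-1}M \subseteq \pi^{-1}(\FrG^{-1}M'')$, i.e.\ $\pi(\FrG^{-1}M) \subseteq \FrG^{-1}M''$. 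This gives right-exactness.

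For exactness in the middle I must show that the image of $\FrG^{-1}M'$ in $\FrG^{-1}M$ equals the kernel of $\FrG^{-1}M \to \FrG^{-1}M''$, which by the previous paragraph is $\FrG^{-1}M \cap M'$ (intersection taken inside $M$, using $M' = \ker\pi$). The inclusion of the image of $\FrG^{-1}M'$ into this intersection is automatic from functoriality. Conversely I need $\FrG^{-1}M \cap M' \subseteq \FrG^{-1}M'$; this is where I expect the real work to be — it is the analogue of the statement that, for the functor $\overline{\phantom M}$, one only gets end-exactness and not left-exactness, so I should not expect equality but rather the inclusion that makes the sequence exact at $\FrG^{-1}M$. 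I would try to prove it again via \autoref{t.Mminimal}: set $P \defeq \FrG^{-1}M \cap M'$ and note $\Fr^*P = \Fr^*\FrG^{-1}M \cap \Fr^*M' \supseteq \gamma_M(M) \cap \Fr^*M'$; the point is then to identify $\gamma_M(M) \cap \Fr^*M'$ with (something containing) $\gamma_{M'}(M')$ — which holds because $\gamma$ is a map of $\gamma$-sheaves, so $\gamma_M(M') = \gamma_{M'}(M') \subseteq \Fr^*M'$, giving $\gamma_M(M) \cap \Fr^*M' \supseteq \gamma_{M'}(M')$ — hence $\Fr^*P \supseteq \gamma_{M'}(M')$ is at least a subsheaf of $M'$ with the defining property, so $\FrG^{-1}M' \subseteq P$. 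Combined with the automatic reverse inclusion this gives the desired exactness in the middle. The main obstacle is thus the careful bookkeeping of these intersections inside $\Fr^*M$ and the verification that $\Fr^*$ of an intersection of submodules of $M$ equals the intersection of their images — valid here because $\Fr^*$ is flat and we are intersecting finitely many coherent submodules; for the infinite intersections implicit in \autoref{t.Mminimal} one falls back on the fact, used already in \autoref{t.MainLocalCase}, that $\Fr^*$ commutes with inverse limits. Once these compatibilities are in place, all three inclusions above are short applications of \autoref{t.Mminimal}, and the lemma follows.
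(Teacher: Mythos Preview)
Your surjectivity argument via \autoref{t.Mminimal} is correct and is a legitimate alternative to the paper's route, which instead unwinds the $D^{(1)}_X$--module construction: the image of the natural transformation $\id \to \Fr^*$ is end-exact, the operation $A \mapsto D^{(1)}_X A$ on $\CO_X$--submodules of a $D^{(1)}_X$--module preserves inclusions and surjections, and $\Fr^{-1}$ is exact; composing these gives end-exactness of $\FrG^{-1}$. Your approach via the universal property is more elementary and avoids invoking the $D$--module description at all, which is in the spirit of the remark following \autoref{t.Mminimal}.

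However, you have misread what ``end-exact'' means here. It does \emph{not} mean right-exact; it means preserving exactness at \emph{both} ends of a short exact sequence, i.e.\ preserving injections and surjections, with no claim about the middle term. This is what the paper's proof verifies (inclusions go to inclusions, surjections to surjections), and it is the general behaviour of the image of a natural transformation between exact functors. So the injectivity of $\FrG^{-1}M' \to \FrG^{-1}M$, which you explicitly disclaim, is actually half of what is to be shown --- and it is immediate, since both sit as subsheaves of $M$ via $M' \subseteq M$.

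Conversely, your attempt at exactness in the middle is both unnecessary and broken. You need $P \defeq \FrG^{-1}M \cap M' \subseteq \FrG^{-1}M'$, but the computation you give --- $\Fr^*P \supseteq \gamma_{M'}(M')$, hence $\FrG^{-1}M' \subseteq P$ by minimality --- yields only the inclusion $\FrG^{-1}M' \subseteq P$ that you already called ``automatic''. The desired reverse inclusion cannot be obtained this way, and in fact middle exactness fails in general: under $\Fr^*$ the question becomes whether $D^{(1)}_X\gamma(M) \cap \Fr^*M' = D^{(1)}_X\gamma(M')$ inside $\Fr^*M$, and there is no reason for the left side to be contained in the right. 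So drop the middle argument, add the (one-line) check that injections are preserved, and your proof is complete.
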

\begin{proof}
Formation of the image of the functorial map $\id \to[\gamma] \Fr^*$ of exact functors is end-exact (see for example \cite[2.17 Appendix 1]{Katz.RigidLocal}). If $M$ is a $D^{(1)}_X$--module and $A \subseteq B$ are $\CO_X$--submodules of $M$ then $D^{(1)}_XA \subseteq D^{(1)}_XB$. If $M \onto N$ is a surjection of $D^{(1)}$--modules with induces a surjection on $\CO_X$--submodules $A \onto B$ then, clearly, $D^{(1)}_X A$ surjects onto $D^{(1)}_X B$. Now one concludes by observing that $\Fr^{-1}$ is an exact functor.
\end{proof}

\begin{lem}\label{t.iteratedFrobInv}
    Let $N \subseteq M$ be an inclusion of $\gamma$--sheaves such that $\sigma^{n*} N \supseteq \gamma^n(M)$ (i.e. the quotient is nilpotent of order $\leq n$). Then $\sigma^{(n-1)*}(N \cap \FrG^{-1}M) \supseteq \gamma^{n-1}(\FrG^{-1}M)$.
\end{lem}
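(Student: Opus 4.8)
The plan is to deduce everything from the universal property of $\FrG^{-1}$ recorded in \autoref{t.Mminimal}. Write $M' \defeq \FrG^{-1}M$, let $\pi\colon M \onto C \defeq M/N$ be the canonical surjection, and note that the hypothesis $\Fr^{n*}N \supseteq \gamma^n(M)$ says precisely that $\gamma^n_C = 0$. I would introduce the $\CO_X$--submodule
\[
  N_1 \defeq \pi^{-1}\bigl(\Ker\gamma^{n-1}_C\bigr) \subseteq M ,
\]
which evidently contains $N$. The whole argument then reduces to proving the single inclusion $M' \subseteq N_1$: once this is known, $M' \subseteq N_1 = \pi^{-1}(\Ker\gamma^{n-1}_C)$ gives $\Fr^{(n-1)*}\pi\bigl(\gamma^{n-1}_M(M')\bigr) = \gamma^{n-1}_C\bigl(\pi(M')\bigr) = 0$, using that a morphism of $\gamma$--sheaves commutes with the $\gamma^{k}$'s; hence $\gamma^{n-1}_M(M') \subseteq \Ker\Fr^{(n-1)*}\pi = \Fr^{(n-1)*}N$ by exactness of $\Fr^{(n-1)*}$. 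Since $M'$ is a $\gamma$--subsheaf of $M$ we also have $\gamma^{n-1}_M(M') = \gamma^{n-1}_{M'}(M') \subseteq \Fr^{(n-1)*}M'$, and flatness of $\Fr$ gives $\Fr^{(n-1)*}N \cap \Fr^{(n-1)*}M' = \Fr^{(n-1)*}(N\cap M')$ inside $\Fr^{(n-1)*}M$, so the two inclusions combine to the assertion.

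To prove $M' \subseteq N_1$ I would verify $\Fr^*N_1 \supseteq \gamma(M)$ and then quote \autoref{t.Mminimal}, which identifies $\FrG^{-1}M$ as the smallest $\CO_X$--submodule $P$ of $M$ with $\Fr^*P \supseteq \gamma(M)$ (its proof applies verbatim to an arbitrary $\CO_X$--submodule, so it does not matter that $N_1$ is not presented as a $\gamma$--subsheaf). For $m \in M$: since $\pi$ is a morphism of $\gamma$--sheaves, $(\Fr^*\pi)(\gamma_M(m)) = \gamma_C(\pi(m))$; applying the map $\gamma^{n-1}_{\Fr^*C} = \Fr^*(\gamma^{n-1}_C)$ and using the identity $\Fr^*(\gamma^{n-1}_C)\circ\gamma_C = \gamma^n_C = 0$ yields $\gamma_C(\pi(m)) \in \Ker\gamma^{n-1}_{\Fr^*C} = \Fr^*\bigl(\Ker\gamma^{n-1}_C\bigr)$ (again exactness of $\Fr^*$). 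Therefore $\gamma_M(m) \in (\Fr^*\pi)^{-1}\Fr^*\bigl(\Ker\gamma^{n-1}_C\bigr) = \Fr^*N_1$, so $\gamma(M) \subseteq \Fr^*N_1$ and \autoref{t.Mminimal} delivers $M' \subseteq N_1$.

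The only thing requiring any care is the routine bookkeeping with the compatibilities $\Fr^*(\gamma^k_{\bullet}) = \gamma^k_{\Fr^*\bullet}$, $\Fr^{k*}\phi\circ\gamma^k = \gamma^k\circ\phi$ for a $\gamma$--morphism $\phi$, and the fact that the exact flat functor $\Fr^*$ commutes with kernels, preimages and finite intersections of submodules of a fixed module. The genuinely structural step --- and, I think, the one reason the statement holds --- is the choice of the intermediate submodule $N_1$ and the observation that it meets the hypothesis of \autoref{t.Mminimal}; I would expect everything else to be immediate.
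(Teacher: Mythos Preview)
Your proof is correct and follows essentially the same route as the paper: your intermediate submodule $N_1 = \pi^{-1}(\Ker\gamma^{n-1}_C)$ is exactly the paper's $M' = (\gamma^{n-1})^{-1}(\Fr^{(n-1)*}N)$ rewritten via the quotient $C=M/N$, and both arguments proceed by checking $\gamma(M)\subseteq \Fr^*N_1$, invoking \autoref{t.Mminimal} to get $\FrG^{-1}M\subseteq N_1$, and then intersecting the resulting inclusion $\gamma^{n-1}(\FrG^{-1}M)\subseteq \Fr^{(n-1)*}N$ with $\gamma^{n-1}(\FrG^{-1}M)\subseteq \Fr^{(n-1)*}\FrG^{-1}M$. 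Your parenthetical worry that $N_1$ might not be a $\gamma$--subsheaf is unnecessary---it is one, being the preimage under a $\gamma$--morphism of $\Ker\gamma^{n-1}_C$---but the remark does no harm.
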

\begin{proof}
    Consider the $\gamma$--subsheaf $M'=(\gamma^{n-1})^{-1}(\Fr^{(n-1)*}N)$ of $M$. One has
    \[
        \Fr^*M'=(\Fr^*\gamma^{n-1})^{-1}(\Fr^{n*}N) \supseteq \gamma(M)
    \]
    by the assumption that $\gamma^n(M) \subseteq \sigma^{n*}N$. Since $\FrG^{-1}M$ is minimal with respect to this property we have $\FrG^{-1}M \subseteq (\gamma^{n-1})^{-1}(\Fr^{(n-1)*}N)$. Applying $\gamma^{n-1}$ we conclude that $\gamma^{n-1}(\FrG^{-1}M) \subseteq \Fr^{(n-1)*}N$. Since $\FrG^{-1}M$ is a $\gamma$--sheaf we have $\gamma(\FrG^{-1}M) \subseteq \Fr^{(n-1)*}(\FrG^{-1}M)$ such that the claim follows.
\end{proof}

\subsection{Existence of minimal \texorpdfstring{$\gamma$}{Gamma}--sheaves}\label{s.max_nil_quotient}

For a given $\gamma$--sheaf $M$ we can iterate the functor $\FrG^{-1}$ to obtain a decreasing sequence of $\gamma$--subsheaves
\[
    \ldots \subseteq M_{3} \subseteq M_{2} \subseteq M_{1} \subseteq M (\to[\gamma] \Fr^*M \to \ldots)
\]
where $M_{i} = \FrG^{-1}M_{i-1}$. Note that each inclusion $M_{i} \subseteq M_{i-1}$ is a nil-isomorphism.
\begin{prop}\label{prop.const-miniml}
    Let $M$ be a coherent $\gamma$--sheaf. Then the following conditions are equivalent.
    \begin{enumerate}
        \item $M$ has a nil-isomorphic $\gamma$--subsheaf $\underline{M}$ which does not have non-trivial nilpotent quotients (i.e. $\underline{M}$ satisfies condition (b) in the definition of minimal $\gamma$--sheaf).
        \item $M$ has a \emph{unique} smallest nil-isomorphic subsheaf (equiv. $M$ has a (unique) maximal nilpotent quotient).
        \item For some $n \geq 0$, $M_{n} = M_{n+1}$.
        \item There is $n \geq 0$ such that for all $m \geq n$, $M_{m} = M_{m+1}$.
    \end{enumerate}
\end{prop}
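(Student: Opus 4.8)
The plan is to prove the cycle of implications $(c)\Rightarrow(a)\Rightarrow(b)\Rightarrow(c)$ and, separately, the equivalence $(c)\Leftrightarrow(d)$; since $(d)\Rightarrow(c)$ is immediate, this shows all four conditions are equivalent. Before starting I would record three facts that get used repeatedly. First, each inclusion $M_i\subseteq M_{i-1}$, and hence each $M_i\subseteq M$, is a nil-isomorphism, because $M_{i-1}/M_i=M_{i-1}/\FrG^{-1}M_{i-1}$ has zero structural map. Second, finite intersections of $\gamma$--subsheaves of a fixed $\gamma$--sheaf are again $\gamma$--subsheaves, since $\Fr^*$ is exact and therefore commutes with finite intersections. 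Third, a coherent locally nilpotent $\gamma$--sheaf is nilpotent (noetherianness of $X$), so that $M/N$ is nilpotent of some finite order whenever $N\subseteq M$ is a nil-isomorphic subsheaf.

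The equivalence $(c)\Leftrightarrow(d)$ is formal: applying $\FrG^{-1}$ to the equality $M_n=M_{n+1}$ yields $M_{n+1}=M_{n+2}$, and inductively $M_m=M_{m+1}$ for every $m\geq n$. For $(c)\Rightarrow(a)$, suppose $M_n=M_{n+1}=\FrG^{-1}M_n$. Then \autoref{t.Minimal=Stable-Open} tells us that the coherent $\gamma$--sheaf $M_n$ has no non-trivial nilpotent quotient, and since $M_n\subseteq M$ is a nil-isomorphism we may take $\underline{M}\defeq M_n$.

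For $(a)\Rightarrow(b)$, let $\underline{M}\subseteq M$ be as in (a) and let $N\subseteq M$ be an arbitrary nil-isomorphic subsheaf. Then $\underline{M}\cap N$ is a $\gamma$--subsheaf, and $\underline{M}/(\underline{M}\cap N)\cong(\underline{M}+N)/N$ is a $\gamma$--subsheaf of the nilpotent $\gamma$--sheaf $M/N$, hence itself nilpotent; as $\underline{M}$ has no non-trivial nilpotent quotient, this forces $\underline{M}=\underline{M}\cap N$, i.e.\ $\underline{M}\subseteq N$. Thus $\underline{M}$ is contained in every nil-isomorphic subsheaf, so it is the unique smallest one; the parenthetical reformulation is simply the remark that $N\mapsto M/N$ is an inclusion-reversing bijection between the nil-isomorphic subsheaves of $M$ and the nilpotent quotients of $M$.

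The implication $(b)\Rightarrow(c)$ is the only step I expect to need real care. Let $\underline{M}$ be the smallest nil-isomorphic subsheaf and let $n$ be the order of nilpotency of $M/\underline{M}$. Every $M_i$ is nil-isomorphic to $M$, so $\underline{M}\subseteq M_i$ and therefore $\underline{M}\cap M_i=\underline{M}$. Now I would feed the inclusion $\underline{M}\subseteq M_{i-1}$ into \autoref{t.iteratedFrobInv}: it shows that if $M_{i-1}/\underline{M}$ is nilpotent of order $\leq m$, then $M_i/\underline{M}=\FrG^{-1}M_{i-1}/(\underline{M}\cap\FrG^{-1}M_{i-1})$ is nilpotent of order $\leq m-1$. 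Starting from $m=n$ and iterating, $M_n/\underline{M}$ is nilpotent of order $\leq 0$, i.e.\ $M_n=\underline{M}$, and then $M_{n+1}=\FrG^{-1}M_n\subseteq M_n=\underline{M}\subseteq M_{n+1}$ gives $M_n=M_{n+1}$, which is (c). The subtle points are that one must apply \autoref{t.iteratedFrobInv} to $M_{i-1}$ rather than to $M$, and that one must track the strictly decreasing nilpotency orders of the quotients $M_i/\underline{M}$; this is exactly where it matters that $\FrG^{-1}$ is a concrete, localization-compatible functor, since that is what makes the canonical chain $(M_i)$ available at all. Everything else is formal once the three preliminary facts are in hand.
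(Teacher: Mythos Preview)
Your proof is correct and follows essentially the same route as the paper: the same cycle of implications, with $(a)\Rightarrow(b)$ handled by intersecting $\underline{M}$ with an arbitrary nil-isomorphic subsheaf, and the substantive step $(b)\Rightarrow(c)$ obtained by iterating \autoref{t.iteratedFrobInv} to drive the nilpotency order of $M_i/\underline{M}$ down to zero. The only cosmetic difference is that the paper records $(d)\Rightarrow(a)$ rather than $(c)\Rightarrow(a)$, and your closing remark about localization-compatibility of $\FrG^{-1}$ is not actually needed here (it is used only later, in the globalization argument of \autoref{thm.main}).
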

\begin{proof}
  (a) $\Rightarrow$ (b): Let $\underline{M} \subseteq M$ be the nil-isomorphic subsheaf of part (a) and let $N \subseteq M$ be another nil-isomorphic subsheaf of $M$. By \autoref{t.lemma.basic} it follows that $\underline{M} \cap N$ is also nil-isomorphic to $M$. In particular $\underline{M}/(\underline{M} \cap N)$ is a nilpotent quotient of $\underline{M}$ and hence must be trivial. Thus $N \subseteq \underline{M}$ which shows that $\underline{M}$ is the smallest nil-isomorphic subsheaf of $M$.

  (b) $\Rightarrow$ (c): Let $N$ be this smallest subsheaf as in (b). Since each $M_{i}$ is nil-isomorphic to $M$, it follows that $N \subseteq M_i$ for all $i$. Let $n$ be the order of nilpotency of the quotient $M/N$, i.e. $\gamma^n(M) \subseteq \Fr^{n*}N$. Repeated application ($n$ times) of \autoref{t.iteratedFrobInv} yields that $M_{n} \subseteq N$. Hence we get $N \subseteq M_{n+1} \subseteq M_{n} \subseteq N$ which implies that $M_{n+1} = M_{n}$.

  (c) $\Rightarrow$ (d) is clear.

  (d) $\Rightarrow$ (a) is clear by \autoref{t.Minimal=Stable-Open}.
\end{proof}

This characterization enables us to show the existence of minimal $\gamma$-sheaves by reducing to the local case which we proved above.

\begin{thm}\label{thm.main}
    Let $M$ be a coherent $\gamma$--sheaf. There is a unique subsheaf $\underline{M}$ of $M$ which does not have non-trivial nilpotent quotients.
\end{thm}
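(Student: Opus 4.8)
The plan is to reduce to the local case --- already settled in \autoref{t.MainLocal} --- by exploiting that the operation $M \mapsto \FrG^{-1}M$ commutes with localization (\autoref{t:FrG-1_commutes_local}) and that the locus of stabilization is open (\autoref{t.Minimal=Stable-Open}). By \autoref{prop.const-miniml} it is enough to check condition (c) there, i.e.\ that the decreasing chain $\cdots \subseteq M_{2} \subseteq M_{1} \subseteq M$, with $M_{i} = \FrG^{-1}M_{i-1}$, eventually becomes constant; then (c) $\Rightarrow$ (a), (b) produce the nil-isomorphic subsheaf $\underline{M} = M_{N}$ without non-trivial nilpotent quotients, which is simultaneously the unique smallest nil-isomorphic subsheaf of $M$ --- the asserted $\underline{M}$.

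First I would verify stabilization stalk by stalk. For $x \in X$ the stalk $M_x$ is a coherent $\gamma$--sheaf over the regular, local, $F$--finite ring $\CO_{X,x}$, so \autoref{t.MainLocal} applies and $M_x$ satisfies condition (a), hence condition (c), of \autoref{prop.const-miniml}: there is $n(x) \geq 0$ with $(M_x)_{n(x)} = (M_x)_{n(x)+1}$. Because $\FrG^{-1}$ commutes with localization (\autoref{t:FrG-1_commutes_local}) we have $(M_i)_x = (M_x)_i$ for all $i$, so this reads $(M_{n(x)})_x = (M_{n(x)+1})_x$. Now, for $n \geq 0$, put $U_n = \{\, x \in X : (M_{n})_x = (M_{n+1})_x \,\}$. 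Each $M_n$ is a coherent $\gamma$--sheaf (a submodule of the coherent $M$ over the noetherian $X$), so by the second part of \autoref{t.Minimal=Stable-Open}, applied to $M_n$, the set $U_n$ is open in $X$; applying $\FrG^{-1}$ and using \autoref{t:FrG-1_commutes_local} once more gives $U_n \subseteq U_{n+1}$, and the preceding sentence gives $x \in U_{n(x)}$. Hence $\{U_n\}_{n\geq 0}$ is an increasing open cover of $X$, and since $X$ is noetherian, hence quasi-compact, $X = U_N$ for some $N$. Then the coherent subsheaves $M_N$ and $M_{N+1}$ of $M$ agree on every stalk, so $M_N = M_{N+1}$, which is condition (c) of \autoref{prop.const-miniml}, and we are done.

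I do not expect a genuine obstacle here: everything substantial lives in the preparatory results \autoref{t.MainLocal}, \autoref{t:FrG-1_commutes_local} and \autoref{t.Minimal=Stable-Open}, and what remains is a bookkeeping argument with an increasing open cover. The only step deserving an explicit line is the inclusion $U_n \subseteq U_{n+1}$ --- that once the iterated chain stabilizes at a point it remains stabilized there, which is the local implication (c) $\Rightarrow$ (d) of \autoref{prop.const-miniml} and follows immediately from $\FrG^{-1}$ commuting with localization --- since it is precisely this monotonicity that lets quasi-compactness of $X$ finish the argument.
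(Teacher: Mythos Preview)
Your argument is correct and follows exactly the paper's own proof: reduce via \autoref{prop.const-miniml} to showing the chain $M_i$ stabilizes, define the open sets $U_n$ where it has stabilized locally, use \autoref{t:FrG-1_commutes_local} and \autoref{t.Minimal=Stable-Open} to see they are open and increasing, invoke \autoref{t.MainLocal} to see they cover $X$, and conclude by noetherianity. The only difference is that you are a bit more explicit about the inclusion $U_n \subseteq U_{n+1}$ and the identification $(M_i)_x = (M_x)_i$, which the paper leaves implicit.
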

\begin{proof}
    By \autoref{prop.const-miniml} it is enough to show that the sequence $M_{i}$ is eventually constant. Let $U_{i}$ be the subset of $X$ consisting of all $x \in X$ on which $(M_{i})_x= (M_{i+1})_x (= (\FrG^{-1}M_{i})_x)$. By \autoref{t.Minimal=Stable-Open} $U_{i}$ is an open subset of $X$ (in this step I use the key observation \autoref{t.commuteslocal}) an that $(M_{i})|_{U_{i}}=(M_{i+1})|_{U_{i}}$. By the functorial construction of the $M_{i}$'s the equalilty $M_{i}=M_{i+1}$ for one $i$ implies equality for all bigger $i$. It follows that the sets $U_{i}$ form an increasing sequence of open subsets of $X$ whose union is $X$ itself by \autoref{t.MainLocal} and \autoref{prop.const-miniml}. Since $X$ is noetherian, $X=U_{i}$ for some $i$. Hence $M_{i}=M_{i+1}$ such that the claim follows by \autoref{prop.const-miniml}.
\end{proof}

\begin{thm}\label{t.MinFunct}
    Let $M$ be a coherent $\gamma$--sheaf. Then there is a functorial way to assign to $M$ a \emph{minimal} $\gamma$--sheaf $M_{\min}$ in the nil-isomorphism class of $M$.
\end{thm}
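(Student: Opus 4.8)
The plan is to combine the two halves of the theory developed above: condition (a) of minimality is handled by the functor $M \mapsto \overline{M}$, and condition (b) by the iterated functor $M \mapsto M_n$ (for $n$ large), whose eventual stabilization is guaranteed by \autoref{thm.main}. So the construction of $M_{\min}$ should be: first pass to $\overline{M}$, the image of $M$ in $\Gen M$, which has injective structural map and is nil-isomorphic to $M$; then apply $\FrG^{-1}$ repeatedly to $\overline{M}$. By \autoref{thm.main} (via \autoref{prop.const-miniml}) this stabilizes, and we set $M_{\min} \defeq (\overline{M})_n$ for any $n$ large enough that $(\overline{M})_n = (\overline{M})_{n+1}$. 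One must check that $M_{\min}$ is genuinely minimal: it satisfies (b) by \autoref{t.Minimal=Stable-Open} since it is a fixed point of $\FrG^{-1}$, and it satisfies (a) because $\overline{M}$ has injective structural map and, by \autoref{lem.inj-sub.sur-quot}, this property is inherited by the subsheaf $M_{\min} \subseteq \overline{M}$. Moreover $M_{\min}$ is nil-isomorphic to $M$ since each step ($M \onto \overline{M}$, and each $(\overline{M})_{i} \subseteq (\overline{M})_{i-1}$) is a nil-isomorphism, and nil-isomorphisms compose.

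For functoriality I would argue as follows. The assignment $M \mapsto \overline{M}$ is a functor by the proposition following \eqref{eq.Gen}, and $M \mapsto \FrG^{-1}M$ is a functor by \autoref{t.end-exact}; hence each $M \mapsto (\overline{M})_i$ is a functor. The only subtlety is that the integer $n$ at which the sequence stabilizes depends on $M$, so I would phrase $M_{\min}$ not as $(\overline{M})_n$ for a fixed $n$ but as the eventual value $\bigcap_i (\overline{M})_i$ (equivalently $\varprojlim$), which is a well-defined subsheaf functorially attached to $M$ and which, by stabilization, equals $(\overline{M})_n$ for $n \gg 0$. A morphism $\phi\colon M \to N$ induces $\overline{\phi}\colon \overline{M} \to \overline{N}$, hence compatible maps $(\overline{M})_i \to (\overline{N})_i$ for all $i$ (using that $\FrG^{-1}$ is a functor and that $\phi(\FrG^{-1}M') \subseteq \FrG^{-1}N'$ whenever $\phi(M')\subseteq N'$, which is immediate from the characterization in \autoref{t.Mminimal}), and passing to the intersection yields $\phi_{\min}\colon M_{\min} \to N_{\min}$. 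Compatibility with composition and identities is then formal.

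The one point that needs genuine care — and which I expect to be the main (if modest) obstacle — is verifying that this functor really lands in the nil-isomorphism class and interacts correctly with nil-isomorphisms: namely, that if $\phi\colon M \to N$ is itself a nil-isomorphism then $\phi_{\min}$ is an isomorphism, so that $M_{\min}$ depends (up to canonical isomorphism) only on the nil-isomorphism class. For this I would note that a nil-isomorphism $\phi$ induces a nil-isomorphism $\overline{\phi}$ on the images in $\Gen$, that $\Gen M \cong \Gen N$ canonically, and then that $(\overline{M})_n$ and $(\overline{N})_n$ are both the (unique, by \autoref{thm.main}) smallest nil-isomorphic subsheaf sitting inside a common generator — so they must coincide. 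Alternatively, and perhaps more cleanly, one observes that $M_{\min}$ is characterized intrinsically as the unique minimal $\gamma$-subsheaf of $\Gen M$ whose image generates $\Gen M$, and since $\Gen$ is a nil-isomorphism invariant the claim follows. Everything else is a routine assembly of functors already constructed, together with the key finiteness input \autoref{thm.main}.
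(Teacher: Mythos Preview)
Your proposal is correct and is essentially the paper's own proof: define $M_{\min} \defeq \underline{(\overline{M})}$, where $\overline{M}$ handles condition (a) and $\underline{(\,\cdot\,)}$ (the stable value of the iterates $\FrG^{-i}$, which exists by \autoref{thm.main}/\autoref{prop.const-miniml}) handles condition (b), with (a) inherited by subsheaves via \autoref{lem.inj-sub.sur-quot}. You are more explicit than the paper about why the assignment is functorial despite the stabilizing index depending on $M$, and your final paragraph on nil-isomorphism invariance anticipates what the paper proves separately as \autoref{t.nil-iso}.
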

\begin{proof}
    We may first replace $M$ by the nil-isomorphic quotient $\overline{M}$ which satisfies condition (a) of \autoref{d.minimal}. Then replace $\overline{M}$ by its minimal nil-isomorphic submodule $\underline{(\overline{M})}$ which also satisfies condition (b) of \autoref{d.minimal} (and condition (a) because (a) is passed to submodules). Thus the assignment $M \mapsto M_\min \defeq \underline{(\overline{M})}$ is a functor since it is a composition of the functors $M \mapsto \overline{M}$ and $M \mapsto \underline{M}$.
\end{proof}

\begin{prop}\label{t.nil-iso}
    If $\phi \colon M \to N$ is a nil-isomorphism, then $\phi_\min \colon M_\min \to N_\min$ is an isomorphism.
\end{prop}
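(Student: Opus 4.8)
The plan is to use that, by \autoref{t.MinFunct}, $M \mapsto M_{\min}$ is the composition of two functors: $M \mapsto \overline{M}$, which comes with the natural surjection $M \onto \overline{M}$ whose kernel $M^{\circ}$ is the largest locally nilpotent subsheaf of $M$; and $M \mapsto \underline{M}$, which comes with the natural inclusion $\underline{M} \into M$, where $\underline{M}$ is the \emph{unique smallest} nil-isomorphic subsheaf of $M$ (\autoref{thm.main}, \autoref{prop.const-miniml}). Thus $\phi_{\min} = \underline{(\overline{\phi})}$, and it suffices to understand the effect of each of the two functors on $\phi$. First I would reduce to the case that $\phi$ is injective or surjective: since $M,N$ are coherent and $X$ is noetherian, $\Ker\phi$ and $\Coker\phi$ are nilpotent (\autoref{lem.nil-iso}(a)), so in the factorisation $M \onto \Image\phi \into N$ the first arrow is a nil-isomorphism with zero cokernel and the second a nil-isomorphism with zero kernel; since $M \mapsto M_{\min}$ is a functor, $\phi_{\min}$ is the composite of the corresponding two maps, so I may assume $\phi$ is surjective or injective.

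If $\phi \colon M \onto N$ is surjective, with (nilpotent) kernel $K$, I would show that already $\overline{\phi} \colon \overline{M} \to \overline{N}$ is an isomorphism. The crucial point is the identity $M^{\circ} = \phi^{-1}(N^{\circ})$: indeed $\phi(M^{\circ}) \subseteq N^{\circ}$ because $\phi(M^{\circ})$ is a locally nilpotent subsheaf of $N$, and $\phi^{-1}(N^{\circ})$ fits in a short exact sequence $0 \to K \to \phi^{-1}(N^{\circ}) \to N^{\circ} \to 0$, hence is nilpotent (\autoref{prop.nilp-abcat}) and so contained in $M^{\circ}$. Since $\phi$ is onto, $\overline{\phi}$ is then onto with kernel $\phi^{-1}(N^{\circ})/M^{\circ}=0$, i.e. an isomorphism; applying the functor $M \mapsto \underline{M}$ finishes this case.

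If $\phi$ is injective, I regard $M$ as a $\gamma$--subsheaf of $N$ with $N/M$ nilpotent. One checks at once that $M^{\circ} = M \cap N^{\circ}$, so $\overline{\phi} \colon \overline{M} \into \overline{N}$ is again injective, with cokernel $N/(M+N^{\circ})$ a quotient of the nilpotent $N/M$; hence $\overline{\phi}$ is an injective nil-isomorphism and it remains to prove: for an injective nil-isomorphism $\psi \colon P \into Q$ of coherent $\gamma$--sheaves, $\underline{\psi} \colon \underline{P} \to \underline{Q}$ is an isomorphism. This is the heart of the matter, and I would argue purely with the characterisation of $\underline{M}$ as the smallest nil-isomorphic subsheaf of $M$. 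Since $P \subseteq Q$ is a nil-isomorphism, minimality of $\underline{Q}$ forces $\underline{Q} \subseteq P$; and $P/\underline{Q}$, being a subsheaf of the nilpotent $Q/\underline{Q}$, is nilpotent, so $\underline{Q} \subseteq P$ is a nil-isomorphism and hence $\underline{P} \subseteq \underline{Q}$. Conversely $Q/\underline{P}$ is an extension of the nilpotent $Q/P$ by the nilpotent $P/\underline{P}$, hence nilpotent, so $\underline{P} \subseteq Q$ is a nil-isomorphism and $\underline{Q} \subseteq \underline{P}$. Therefore $\underline{P} = \underline{Q}$ as subsheaves of $Q$. Since $\underline{M} = \bigcap_{i}(\FrG^{-1})^{i}M$ is visibly a subfunctor of the identity (the map it assigns to a morphism is just the restriction of that morphism), $\underline{\psi}$ is nothing but the inclusion of this common subsheaf into itself, i.e. an isomorphism; taking $\psi = \overline{\phi}$ completes the proof.

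I expect the only delicate point to be the bookkeeping of canonicity: one must make sure that the maps $\overline{\phi}$ and $\underline{\psi}$ produced by functoriality genuinely are those induced by $\phi$ on the relevant sub- and quotient sheaves, which is exactly what upgrades the equality $\underline{P}=\underline{Q}$ to the statement that $\underline{\psi}$ is an isomorphism rather than just $\underline{P}\cong\underline{Q}$ abstractly; the rest is routine use of closure of nilpotent $\gamma$--sheaves under subsheaves, quotients and extensions. A slicker variant of the same computation is to observe that both $\overline{(\usc)}$ and $\underline{(\usc)}$ carry nil-isomorphisms to nil-isomorphisms, so that $\phi_{\min}$ is a nil-isomorphism between two minimal $\gamma$--sheaves; its kernel is then a nilpotent subsheaf of $M_{\min}$ and its cokernel a nilpotent quotient of $N_{\min}$, both of which must vanish, so $\phi_{\min}$ is an isomorphism.
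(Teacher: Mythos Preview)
Your proposal is correct. The ``slicker variant'' you mention in the final paragraph is precisely the paper's proof: one observes that $\phi_{\min}$ is again a nil-isomorphism, then notes that $\Ker\phi_{\min}$ is a nilpotent subsheaf of the minimal $M_{\min}$ and $\Coker\phi_{\min}$ a nilpotent quotient of the minimal $N_{\min}$, so both vanish. The paper dispatches this in three lines, taking the fact that $\phi_{\min}$ is a nil-isomorphism as clear.

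Your main argument takes a different, more explicit route: you split $\phi$ into a surjection followed by an injection and analyse the effect of $\overline{(\usc)}$ and $\underline{(\usc)}$ separately in each case, ultimately identifying $\underline{P}=\underline{Q}$ as subsheaves. This is longer but has the virtue of unpacking the ``clearly'' in the paper's proof --- you are essentially verifying by hand that both $\overline{(\usc)}$ and $\underline{(\usc)}$ carry nil-isomorphisms to nil-isomorphisms, rather than asserting it. The paper's approach is cleaner once one is willing to grant that preservation step; your approach makes the mechanism visible. Since you already state the slick version at the end, you might simply lead with it and relegate the detailed case analysis to a remark justifying why $\phi_{\min}$ is a nil-isomorphism.
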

\begin{proof}
    Clearly, $\phi_\min$ is a nil-isomorphism. Since $\Ker \phi_\min$ is a nilpotent subsheaf of $M_\min$, we have by \autoref{d.minimal} (a) that $\Ker \phi_\min = 0$. Since $\Coker \phi_\min$ is a nilpotent quotient of $N_\min$ it must be zero by \autoref{d.minimal} (b).
\end{proof}

\begin{cor}
    Let $\CM$ be a finitely generated unit $\CO_{X}[\Fr]$--module. Then $M$ has a unique minimal root in the sense of \cite{Lyub}.
\end{cor}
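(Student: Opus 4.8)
\emph{Proof plan.} The idea is to read the corollary off from \autoref{t.MinFunct} and \autoref{t.min=minroot} once the dictionary between roots and $\gamma$--sheaves is set up. Recall that a \emph{root} of $\CM$ is a coherent $\gamma$--sheaf $(M,\gamma)$ realised as a $\gamma$--submodule of $\CM$ which generates it, i.e.\ with $\gamma$ injective and $\Gen(M \subseteq \CM)$ an isomorphism; a \emph{minimal root} is a root which in addition satisfies condition (b) of \autoref{t.min=minroot}. By \autoref{t.min=minroot} this last condition says exactly that $M$ is a minimal $\gamma$--sheaf in the sense of \autoref{d.minimal}. I will use one elementary observation throughout: a coherent $\gamma$--sheaf $P$ with $\Gen P = 0$ is nilpotent, because then the natural map $P \to \Gen P$ is zero, so $P$ equals its maximal locally nilpotent subsheaf $P^{\circ} = \bigcup \Ker \gamma^{i}_{P}$, and coherence of $P$ (with $X$ noetherian) forces this ascending union to stabilise, i.e.\ $\gamma^{n}_{P} = 0$ for some $n$.

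\emph{Existence.} Since $\CM$ is a finitely generated unit $\CO_{X}[\Fr]$--module it is isomorphic to $\Gen M_{0}$ for some coherent $\gamma$--sheaf $M_{0}$; replacing $M_{0}$ by its image $\overline{M_{0}}$, which is a coherent $\gamma$--submodule of $\Gen M_{0} \cong \CM$ with injective structural map still generating $\CM$, we may assume $M_{0}$ is a root of $\CM$. Now apply \autoref{t.MinFunct} to $M_{0}$ to obtain a minimal $\gamma$--sheaf $(M_{0})_{\min}$ nil-isomorphic to $M_{0}$. Then $\Gen (M_{0})_{\min} \cong \Gen M_{0} \cong \CM$, and since minimality includes condition (a) (injective structural map) $(M_{0})_{\min}$ is again a root of $\CM$; by \autoref{t.min=minroot} it is a minimal root.

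\emph{Uniqueness.} Let $M_{1}, M_{2} \subseteq \CM$ be two minimal roots. The intersection $M_{1} \cap M_{2}$ is a coherent $\gamma$--subsheaf of $\CM$ (kernels and images of maps of $\gamma$--sheaves are $\gamma$--sheaves by \autoref{prop.nilp-abcat}), so $M_{1}/(M_{1}\cap M_{2})$ is a coherent $\gamma$--subsheaf of $\CM/M_{2}$. Because $\Gen$ is exact and $M_{2}$ generates $\CM$ we get $\Gen(\CM/M_{2}) = \Gen \CM/\Gen M_{2} = 0$, hence $\Gen\big(M_{1}/(M_{1}\cap M_{2})\big) = 0$, and by the observation above $M_{1}/(M_{1}\cap M_{2})$ is nilpotent. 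But it is a nilpotent quotient of the minimal $\gamma$--sheaf $M_{1}$, so it vanishes, i.e.\ $M_{1} \subseteq M_{2}$; by symmetry $M_{1} = M_{2}$. (Equivalently one could observe that all roots of $\CM$ lie in a single nil-isomorphism class and invoke \autoref{t.nil-iso} together with the fact that a minimal $\gamma$--sheaf $M$ satisfies $M_{\min} = M$.)

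The content is entirely in the bookkeeping of the first paragraph --- that passing to $\overline{(\usc)}$ and forming intersections (or sums) inside $\CM$ stays within coherent $\gamma$--sheaves and preserves the property of generating $\CM$ --- together with the ``$\Gen = 0 \Rightarrow$ nilpotent'' observation. Once these are in place there is no real obstacle, which is why the statement appears as a corollary of \autoref{t.MinFunct} and \autoref{t.min=minroot}.
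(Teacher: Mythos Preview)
Your argument is correct and, for existence, matches the paper's proof almost verbatim: start from any root $M$ of $\CM$, apply \autoref{t.MinFunct} to obtain $M_{\min}=\underline{M}\subseteq M\subseteq\CM$, and invoke \autoref{t.min=minroot}. The paper's proof stops there and does not spell out uniqueness; your intersection argument (using exactness of $\Gen$ and the observation that a coherent $\gamma$--sheaf with $\Gen=0$ is nilpotent) is a clean way to supply it, and is in the spirit of \autoref{t.lemma.basic} and \autoref{t.nil-iso}.
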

\begin{proof}
    Let $M$ be any root of $\CM$, i.e. $M$ is a coherent $\gamma$--sheaf such that $\gamma_M$ is injective and $\Gen M \cong \CM$. Then $M_\min = \underline{M}$ is a minimal nil-isomorphic $\gamma$--subsheaf of $M$ by \autoref{t.MinFunct}. By \autoref{t.min=minroot} it follows that $M_\min$ is the sought after minimal root of $\CM$.
\end{proof}
Note that the only assumption needed in this result is that $X$ is $F$--finite and regular. In particular it does not rely on the finite--length result \cite{Lyub} Theorem 3.2 which assumes that $R$ is of finite type over a regular local ring (however if does not assume $F$--finiteness).

\begin{thm}
    Let $X$ be regular and $F$--finite. Then the functor
    \[
        \Gen \colon \MinG(X) \to \text{finitely generated unit $\CO_X[\Fr]$--modules}
    \]
    is an equivalence of categories.
\end{thm}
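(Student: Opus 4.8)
The plan is to show that $\Gen$ is essentially surjective, full, and faithful, using the results on minimal $\gamma$--sheaves established above.

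\emph{Essential surjectivity.} Every finitely generated unit $\CO_X[\Fr]$--module $\CM$ is by definition isomorphic to $\Gen M$ for some coherent $\gamma$--sheaf $M$ (a root). Replacing $M$ by $M_{\min}$ in its nil-isomorphism class via \autoref{t.MinFunct}, we get a minimal $\gamma$--sheaf. The point I must check is that passing to a nil-isomorphic $\gamma$--sheaf does not change $\Gen$, i.e. $\Gen M \cong \Gen M_{\min}$. This is because a nil-isomorphism $\phi \colon M \to N$ (kernel and cokernel nilpotent, say killed by $\gamma^k$) becomes an isomorphism after applying $\Gen$: the directed system \eqref{eq.Gen} for $M$ and for $N$ become cofinal in each other by \autoref{lem.nil-iso}(b), which supplies a map $\psi\colon N \to \Fr^{k*}M$ with $\psi\circ\phi = \gamma_M^k$ and $\Fr^{k*}\phi\circ\psi = \gamma_N^k$, so in the limit $\phi$ and $\psi$ are mutually inverse up to the (now invertible) structural maps. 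Hence $\Gen M_{\min} \cong \CM$.

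\emph{Faithfulness and fullness.} Given two minimal $\gamma$--sheaves $M, N$, I must show that $\Hom_{\MinG}(M,N) \to \Hom_{\CO_X[\Fr]}(\Gen M, \Gen N)$ is bijective. For a map $f\colon \Gen M \to \Gen N$ of unit modules, restrict along the canonical maps $M \to \Gen M$ and $N \to \Gen N$: since $M$ is coherent, its image under $M \to \Gen M \to[f] \Gen N$ is a coherent $\gamma$--subsheaf of $\Gen N$, which is the increasing union of the coherent subsheaves $\Fr^{i*}\overline N$; by coherence it lands in some $\Fr^{i*}\overline N$. Pulling back by the isomorphism $\gamma_{\Gen N}^i$ (which identifies $\Fr^{i*}\overline N$ with a $\gamma$--subsheaf nil-isomorphic to $\overline N$, hence containing $N = N_{\min}$ after applying $\underline{(\usc)}$ — here one uses minimality of $N$, namely \autoref{t.min=minroot}, to see the relevant subsheaf equals $N$), we obtain a $\gamma$--sheaf map $M \to N$ inducing $f$. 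For injectivity: if $g\colon M \to N$ induces $0$ on $\Gen$, then $\Image g$ has image $0$ in $\Gen N$, so $\Image g$ is a nilpotent subsheaf of $N$; minimality condition (a) forces $\Image g = 0$, so $g = 0$.

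\emph{Main obstacle.} The routine part is essential surjectivity; the delicate part is fullness, specifically pinning down that the coherent subsheaf of $\Gen N$ obtained from $f$, after applying $\gamma_{\Gen N}^{-i}$, actually maps \emph{into} $N$ rather than merely into something nil-isomorphic to $N$. This is exactly where minimality of $N$ is essential: among coherent $\gamma$--subsheaves of $\Gen N$ with the same $\Gen$, the minimal one is unique and smallest, and \autoref{t.min=minroot}(b) characterizes it as the subsheaf $N'$ with $\gamma(N') \supseteq$ nothing bigger forced — concretely, I would argue that the minimal root inside $\Gen N$ is contained in every coherent $\gamma$--subsheaf whose $\Gen$ is all of $\Gen N$, by running the $\FrG^{-1}$--iteration of \autoref{prop.const-miniml} inside $\Gen N$ and using uniqueness in \autoref{thm.main}. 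Once this containment is secured the verification that the constructed map is a well-defined morphism of $\gamma$--sheaves and that the two assignments are mutually inverse is a diagram chase, which I would leave to the reader.
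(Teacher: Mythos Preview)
Your essential surjectivity and faithfulness arguments are fine and match the paper's. The trouble is entirely in fullness, where you correctly reduce to a map of $\gamma$--sheaves $M \to \Fr^{i*}N$ (landing there by coherence of $M$ and the exhaustion $\Gen N = \bigcup \Fr^{i*}N$), but then lose the thread. Your ``pulling back by $\gamma_{\Gen N}^i$'' is not a well-defined operation on $\gamma$--sheaf maps, and your proposed fix---that the minimal root $N$ is contained in every coherent $\gamma$--subsheaf of $\Gen N$ generating all of $\Gen N$---is aimed in the wrong direction: you need the image of $M$ to land \emph{inside} $N$, not $N$ to land inside something, and there is no reason the image of $M$ should generate all of $\Gen N$.

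The paper's resolution is a one-liner using the machinery you already have: simply apply the functor $(\usc)_{\min}$ of \autoref{t.MinFunct} to the map $M \to \Fr^{i*}N$. Since $M$ is minimal, $M_{\min}=M$; since $\gamma^i \colon N \to \Fr^{i*}N$ is a nil-isomorphism and $N$ is minimal, \autoref{t.nil-iso} gives $(\Fr^{i*}N)_{\min}=N$. This produces the desired map $M \to N$. Equivalently and more concretely: compose $M \to \Fr^{i*}N$ with the projection to the nilpotent quotient $\Fr^{i*}N/\gamma^i(N)$; the image is a nilpotent quotient of $M$, hence zero by minimality of $M$, so the map factors through $\gamma^i(N)\cong N$. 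This is the missing step in your argument, and it is exactly minimality of $M$ (condition (b), no nilpotent quotients) that does the work---not the containment property of $N$ you were reaching for.
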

\begin{proof}
    The preceding corollary shows that $\Gen$ is essentially surjective. The induced map on $\Hom$ sets is injective since a map of minimal $\gamma$--sheaves $f$ is zero if and only if its image is nilpotent (since minimal $\gamma$--sheaves do not have nilpotent sub-modules) which is the condition that $\Gen(f) = 0$. It is surjective since any map between $g:\Gen(M) \to \Gen(N)$ is obtained from a map of $\gamma$--sheaves $M \to \Fr^{e*}N$ for some $e \gg 0$. But this induces a map $M=M_\min \to (\Fr^{e*}N)_\min=N_\min=N$.
\end{proof}

\section{Applications and Examples}\label{s.final}
In this section we discuss some further examples and applications of the results on minimal $\gamma$--sheaves we obtained so far.

\subsection{\texorpdfstring{$\gamma$}{Gamma}--crystals}
The purpose of this section is to quickly explain the relationship of minimal $\gamma$--sheaves to $\gamma$-crystals which were introduced in \cite{BliBoe.CartierCrys}. The category of $\gamma$--crystals is obtained by inverting nil-isomorphisms in $\CohG(X)$. In \cite{BliBoe.CartierCrys} it is shown that the resulting category is abelian. One has a natural functor
\[
    \CohG(X) \onto \CrysG(X)
\]
whose fibers we may think of consisting of nil-isomorphism classes of $M$. Note that the objects of $\CrysG(X)$ are the same as in $\CohG(X)$, however a morphism between $\gamma$--crystals $M \to N$ is represented by a left-fraction, i.e. a diagram of $\gamma$--sheaves $M \Leftarrow M' \rightarrow M$ where the arrow $\Leftarrow$ is a nil-isomorphism.

On the other hand we just constructed the subcategory of minimal $\gamma$--sheaves $\MinG(X) \subseteq \CohG(X)$ and showed that there is a functorial splitting $M \mapsto M_\min$ of this inclusion. An immediate consequence of \autoref{t.nil-iso} is that if $M$ and $N$ are in the same nil-isomorphism class, then $M_\min \cong N_\min$. The verification of this may be reduced to considering the situation
\[
    M \Leftarrow M' \Rightarrow N
\]
with both maps nil-isomorphisms in which case \autoref{t.nil-iso} shows that $M_\min \cong M'_\min \cong N_\min$. One has the following Proposition.
\begin{prop}
    Let $X$ be regular and $F$--finite. Then the composition
    \[
    \MinG(X) \into \CohG(X) \onto \CrysG(X)
    \]
    is an equivalence of categories whose inverse is given by sending a $\gamma$--crystal represented by the $\gamma$--sheaf $M$ to the minimal $\gamma$--sheaf $M_\min$.
\end{prop}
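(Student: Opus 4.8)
The plan is to deduce the statement from the universal property of the localization, using only two facts already in hand: that $M \mapsto M_\min$ is a functor (\autoref{t.MinFunct}) and that it sends nil-isomorphisms to isomorphisms (\autoref{t.nil-iso}). Write $\pi\colon \CohG(X) \onto \CrysG(X)$ for the localization at the class $\mathcal S$ of nil-isomorphisms, and $\iota\colon \MinG(X) \into \CohG(X) \onto \CrysG(X)$ for the functor in the statement. Since $(\usc)_\min$ inverts every morphism in $\mathcal S$, the universal property of $\pi$ produces a unique functor $\Phi\colon \CrysG(X) \to \MinG(X)$ with $\Phi\circ\pi = (\usc)_\min$. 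On objects $\Phi$ sends the $\gamma$--crystal represented by $M$ to $M_\min$, as asserted; unwinding the left-fraction description of morphisms in $\CrysG(X)$, $\Phi$ sends the morphism represented by $M \Leftarrow M' \to N$ (with $M \Leftarrow M'$ a nil-isomorphism) to the composite $M_\min \xleftarrow{\cong} M'_\min \to N_\min$, where the displayed isomorphism is furnished by \autoref{t.nil-iso}.

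Next I would check that $\Phi$ is quasi-inverse to $\iota$. The key observation is that a minimal $\gamma$--sheaf $M$ satisfies $M_\min = M$: condition (a) of \autoref{d.minimal} says $M^\circ = 0$, so the canonical map $M \to \overline M$ is an isomorphism, and condition (b) forces the smallest nil-isomorphic subsheaf $\underline{M}$ to be $M$ itself, since for any nil-isomorphic $N \subseteq M$ the quotient $M/N$ is a nilpotent quotient of $M$, hence zero. Consequently $\Phi\circ\iota = \id_{\MinG(X)}$, with the identity on morphisms being immediate from the functoriality of $(\usc)_\min$.

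For the other composite I would exhibit a natural isomorphism $\iota\circ\Phi \cong \id_{\CrysG(X)}$. For each coherent $\gamma$--sheaf $M$ the canonical maps $M \onto \overline M$ and $M_\min = \underline{\overline M} \into \overline M$ are nil-isomorphisms, so they become isomorphisms after applying $\pi$; composing them yields an isomorphism $\pi(M_\min) \xrightarrow{\cong} \pi(M)$ in $\CrysG(X)$. This is natural in $M$ because both $M \mapsto \overline M$ and $M \mapsto \underline{M}$ are functorial (this is exactly how $(\usc)_\min$ is built in \autoref{t.MinFunct}), and since every object of $\CrysG(X)$ is of the form $\pi(M)$ it provides the required natural isomorphism; in particular $\iota$ is essentially surjective. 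Combining this with the previous paragraph, $\iota$ and $\Phi$ are mutually quasi-inverse, so the composition in the statement is an equivalence, and its quasi-inverse is by construction the assignment $M \mapsto M_\min$.

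The entire argument is bookkeeping with the calculus of fractions, the genuine content being carried by \autoref{t.MinFunct} and \autoref{t.nil-iso} (the latter itself resting on the existence result \autoref{thm.main}). The only step requiring a moment's care is the well-definedness of $\Phi$ on morphisms, i.e.\ independence of the chosen left fraction; but this is precisely what the universal property of $\pi$ guarantees once we know $(\usc)_\min$ inverts $\mathcal S$, so there is no real obstacle beyond assembling the pieces.
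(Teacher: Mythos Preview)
Your argument is correct and complete; the only cosmetic point is that for minimal $M$ one has $M_\min \cong M$ naturally rather than a literal equality (since $\overline{M}$ lives inside $\Gen M$), but this is exactly what you need for the equivalence.

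Your route is genuinely different from the paper's. The paper argues directly that the composite $\MinG(X)\to\CrysG(X)$ is essentially surjective (existence of $M_\min$) and fully faithful: faithfulness comes from the observation that a map $\phi$ between minimal $\gamma$--sheaves is zero in $\CrysG$ if and only if $\Image\phi$ is nilpotent, which is impossible inside a minimal sheaf by condition (a); fullness is then deduced from the functoriality of $(\usc)_\min$. You instead invoke the universal property of the localization to produce the inverse functor $\Phi$ in one stroke and then verify the two composites are naturally isomorphic to the identities. Your approach is cleaner in that it never has to manipulate left fractions by hand or analyze when a morphism vanishes in $\CrysG$; it packages everything into the abstract statement that $(\usc)_\min$ inverts $\mathcal S$. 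The paper's approach, on the other hand, makes the role of the minimality conditions more transparent: condition (a) visibly forces faithfulness, and this concrete link between ``no nilpotent subsheaves'' and injectivity on $\Hom$-sets is somewhat hidden in your formulation. Both rest on the same substantive inputs, \autoref{t.MinFunct} and \autoref{t.nil-iso}, as you correctly note.
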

\begin{proof}
    The existence of $M_\min$ shows that $\MinG(X) \to \CrysG(X)$ is essentially surjective. It remains to show that $\Hom_{\MinG}(M,N) \cong \Hom_{\CrysG}(M,N)$. A map $\phi\colon M \to N$ of minimal $\gamma$--sheaves is zero in $\CrysG$ if and only if $\Image \phi$ is nilpotent. But $\Image \phi$ is a subsheaf of the minimal $\gamma$--sheaf $N$, which by \autoref{d.minimal} (a) has no nontrivial nilpotent subsheaves. Hence $\Image \phi = 0$ and therefore $\phi = 0$. This shows that the map on $\Hom$ sets is injective. The surjectivity follows again by functoriality of $M \mapsto M_{\min}$.
\end{proof}

\begin{cor}
    Let $X$ be regular and $F$--finite. The category of minimal $\gamma$--sheaves $\MinG(X)$ is an abelian category. If $\phi \colon M \to N$ is a morphism then $\Ker_\min \phi = (\Ker \phi)_\min = \underline{\Ker \phi}$ and $\Coker_\min \phi = (\Coker \phi)_\min = \overline{\Coker \phi}$.
\end{cor}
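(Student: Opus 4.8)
The plan is to obtain everything formally from the preceding Proposition. That result exhibits the composite $\MinG(X)\into\CohG(X)\onto\CrysG(X)$ as an equivalence of categories, with quasi-inverse $P\mapsto P_\min$ sending a $\gamma$--crystal represented by a coherent $\gamma$--sheaf $P$ to its minimalization. Since $\CrysG(X)$ is abelian by \cite{BliBoe.CartierCrys}, and an equivalence of categories transports the additive structure together with kernels, cokernels and finite biproducts, the first step is simply to conclude that $\MinG(X)$ is abelian and that its kernels and cokernels are obtained by computing them in $\CrysG(X)$ and transporting back along $P\mapsto P_\min$.

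The second step is to compute kernels and cokernels of a morphism $\phi\colon M\to N$ of minimal $\gamma$--sheaves inside $\CrysG(X)$. For this I would use that the crystal quotient functor $q\colon\CohG(X)\onto\CrysG(X)$ is exact: it is the quotient of $\CohG(X)$ by the Serre subcategory of (locally) nilpotent coherent $\gamma$--sheaves --- a nil-isomorphism being exactly a morphism that $q$ sends to an isomorphism --- and Serre quotient functors are exact (cf. \cite{BliBoe.CartierCrys}). Hence a kernel of $q\phi$ in $\CrysG(X)$ is $q(\Ker\phi)$ and a cokernel is $q(\Coker\phi)$, where $\Ker\phi$ and $\Coker\phi$ are formed in the ambient abelian category $\CohG(X)$. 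Transporting back along the quasi-inverse then gives at once $\Ker_\min\phi=(\Ker\phi)_\min$ and $\Coker_\min\phi=(\Coker\phi)_\min$.

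The last step is to simplify the two right--hand sides using that $M$ and $N$ are already minimal. The $\gamma$--subsheaf $\Ker\phi$ of $M$ inherits condition \autoref{d.minimal}(a) by \autoref{lem.inj-sub.sur-quot}, hence has injective structural map by \autoref{lem.inj-char}, so $(\Ker\phi)^\circ=0$ and $\overline{\Ker\phi}=\Ker\phi$; therefore $(\Ker\phi)_\min=\underline{\overline{\Ker\phi}}=\underline{\Ker\phi}$. Dually, the quotient $\Coker\phi$ of $N$ inherits condition \autoref{d.minimal}(b) by \autoref{lem.inj-sub.sur-quot}, hence so does its further quotient $\overline{\Coker\phi}$; since the structural map of $\overline{\Coker\phi}$ is injective by construction, $\overline{\Coker\phi}$ also satisfies (a), so it is minimal, and a minimal $\gamma$--sheaf is its own minimalization by the uniqueness in \autoref{thm.main}. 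Thus $(\Coker\phi)_\min=\underline{\overline{\Coker\phi}}=\overline{\Coker\phi}$, as claimed.

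I do not anticipate a substantial obstacle: once the preceding Proposition is available the argument is entirely formal. The one point that genuinely needs care is the bookkeeping in the second step --- verifying that the kernel of $\phi$ in $\MinG(X)$ really corresponds, under the equivalence, to the kernel of $\phi$ computed in $\CohG(X)$ followed by minimalization --- and this reduces to the exactness of the crystal quotient functor $q$, imported from \cite{BliBoe.CartierCrys}.
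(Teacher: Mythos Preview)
Your proposal is correct and follows essentially the same approach as the paper: deduce abelianness from the equivalence $\MinG(X)\simeq\CrysG(X)$ established in the preceding Proposition, then read off the kernel and cokernel formulas. The paper's own proof is extremely terse (``this implies also the statement about $\Ker$ and $\Coker$''), whereas you spell out carefully why $(\Ker\phi)_\min=\underline{\Ker\phi}$ and $(\Coker\phi)_\min=\overline{\Coker\phi}$ using \autoref{lem.inj-sub.sur-quot}; this is exactly the content the paper leaves implicit.
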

\begin{proof}
    Since $\MinG(X)$ is equivalent to $\CrysG(X)$ and since the latter is abelian, so is $\MinG(X)$. This implies also the statement about $\Ker$ and $\Coker$.
\end{proof}

\subsection{The parameter test module}
We give an application to the theory of tight closure. In \cite{Bli.int} Proposition 4.5 it was shown that the parameter test module $\tau_{\omega_A}$ is the unique minimal root of the intersection homology unit module $\CL \subseteq H^{n-d}_I(R)$ if $A=R/I$ is the quotient of the regular local ring $R$ (where $\dim R=n$ and $\dim A=d$). Locally, the parameter test module $\tau_{\omega_A}$ is defined as the Matlis dual of $H^d_m(A)/0^*_{H^d_m(A)}$ where $0^*_{H^d_m(A)}$ is the tight closure of zero in $H^d_m(A)$.
The fact that we are now able to construct minimal $\gamma$--sheaves globally allows us to give a global candidate for the parameter test module.
\begin{prop}\label{t.GlobalParTest}
    Let $A = R/I$ where $R$ is regular and $F$--finite. Then there is a submodule $L \subseteq \omega_A= \Ext^{n-d}(R/I,R)$ such that for each $x \in \Spec A$ we have $L_x \cong \tau_{\omega_x}$.
\end{prop}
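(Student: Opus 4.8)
The plan is to exhibit $L$ as the minimal $\gamma$--sheaf attached to $\omega_A$, and then to read off its stalks from the localization compatibility established in \autoref{s.nilpGamma}. First I would put the canonical $\gamma$--sheaf structure on $\omega_A=\Ext^{n-d}_R(R/I,R)$. Since $R$ is regular, $\Fr^*$ is exact and commutes with $\Ext$ of coherent modules, so $\Fr^*\omega_A\cong\Ext^{n-d}_R(R/I^{[q]},R)$, and the surjection $R/I^{[q]}\onto R/I$ induces the structural map $\gamma\colon\omega_A\to\Fr^*\omega_A$ (this is the structure used in \cite{Bli.int}). This $\gamma$ is injective: the short exact sequence $0\to I/I^{[q]}\to R/I^{[q]}\to R/I\to 0$ shows that $\Ker\gamma$ is a quotient of $\Ext^{n-d-1}_R(I/I^{[q]},R)$, which vanishes because $R$ is regular (hence Cohen--Macaulay) while $I/I^{[q]}$ is supported in codimension $\geq n-d$. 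Thus $\omega_A$ has no nontrivial nilpotent subsheaf, i.e. it satisfies condition (a) of \autoref{d.minimal}; in particular $\overline{\omega_A}=\omega_A$, so the minimal $\gamma$--sheaf $(\omega_A)_\min$ produced by \autoref{thm.main} and \autoref{t.MinFunct} is $\underline{\omega_A}$, an honest $\gamma$--subsheaf of $\omega_A$ (not merely a subquotient). I set $L\defeq\underline{\omega_A}\subseteq\omega_A$.

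Next I would pass to a point $x\in\Spec A$, corresponding to a prime $\frm_x\supseteq I$ of $R$, so that $R_x\defeq R_{\frm_x}$ is again regular, local and $F$--finite. By \autoref{t:FrG-1_commutes_local} the operation $\FrG^{-1}$, hence by iteration the operation $M\mapsto\underline{M}$, commutes with localization; together with the fact that $\Ext$ commutes with localization this gives
\[
    L_x \;=\; (\underline{\omega_A})_x \;=\; \underline{(\omega_A)_x} \;\cong\; \underline{\omega_{A_x}},
\]
where $(\omega_A)_x=\Ext^{n-d}_R(R/I,R)_x\cong\Ext^{n-d}_{R_x}(A_x,R_x)=\omega_{A_x}$ is the canonical module of $A_x$ (here $n-d$ is also the codimension of $I$ locally at $\frm_x$, which holds under the standing equidimensionality conventions). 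By the first paragraph $\omega_{A_x}$ has injective structural map, so it is a root of the unit $R_x[\Fr]$--module it generates; and by \cite[Proposition~4.5]{Bli.int} the minimal $\gamma$--subsheaf $\underline{\omega_{A_x}}$ of $\omega_{A_x}$ is precisely the parameter test module $\tau_{\omega_{A_x}}=\tau_{\omega_x}$. Hence $L_x\cong\tau_{\omega_x}$, which is the assertion. (Note also $\Gen L=\Gen\omega_A$, since $L\subseteq\omega_A$ is a nil-isomorphism; this identifies $L$ as the global minimal root of the intersection-homology unit submodule $\CL\subseteq H^{n-d}_I(R)$.)

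I do not expect a serious obstacle: the genuine content is the local statement \cite[Proposition~4.5]{Bli.int}, and the one ingredient from the present paper that is really needed is the localization compatibility \autoref{t:FrG-1_commutes_local} (ultimately \autoref{t.commuteslocal}) — it is exactly because the naive description of $\underline{M}$ as the intersection of all nil-isomorphic subsheaves does not obviously commute with localization that the detour through $\FrG^{-1}$ and $D^{(1)}_X$--modules in \autoref{s.nilpGamma} was set up. The remaining points are routine: the $\gamma$--structure on $\omega_A$ and injectivity of its structural map, and the compatibility of $\Ext$ with localization. The only item requiring a word of care is the bookkeeping with $n-d$ versus the local codimension of $I$, which is why I would state the proposition under the implicit equidimensionality hypotheses already in force for $\omega_A$.
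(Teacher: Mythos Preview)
Your argument has a genuine gap: you take $L=\underline{\omega_A}=(\omega_A)_{\min}$, but this is the minimal root of $\Gen\omega_A=H^{n-d}_I(R)$, whereas \cite[Proposition~4.5]{Bli.int} identifies $\tau_{\omega_{A_x}}$ as the minimal root of the \emph{intersection homology unit module} $\CL_x\subseteq H^{n-d}_{I_x}(R_x)$, not of the full local cohomology. When $A$ is singular one typically has $\CL\subsetneq H^{n-d}_I(R)$, so $\Gen(\underline{\omega_A})=H^{n-d}_I(R)\neq\CL=\Gen\tau_\omega$ and hence $\underline{\omega_{A_x}}\neq\tau_{\omega_x}$. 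Your final parenthetical remark, that ``$\Gen L=\Gen\omega_A$ \ldots\ identifies $L$ as the global minimal root of the intersection-homology unit submodule $\CL$,'' is exactly where the confusion shows: $\Gen\omega_A$ is $H^{n-d}_I(R)$, not $\CL$.

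The paper fixes this by first invoking \cite[Theorem~4.1]{Bli.int} to produce the global unit submodule $\CL\subseteq H^{n-d}_I(R)$, then applying \autoref{thm.main} to take $L$ to be the minimal $\gamma$--sheaf with $\Gen L=\CL$; the localization compatibility and \cite[Proposition~4.5]{Bli.int} then give $L_x\cong\tau_{\omega_x}$ exactly as you argue. The inclusion $L\subseteq\omega_A$ is obtained afterwards, from the fact that $\omega_A$ is a (not necessarily minimal) root of $H^{n-d}_I(R)\supseteq\CL$ and that $L$ is minimal. So your use of localization compatibility and of \cite[Proposition~4.5]{Bli.int} is the right skeleton; what is missing is the passage through $\CL$ rather than through $\omega_A$ itself.
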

\begin{proof}
    Let $\CL \subseteq H^{n-d}_I(R)$ be the unique smallest submodule of $H^{n-d}_I(R)$ which agrees with $H^{n-d}_I(R)$ on all smooth points of $\Spec A$. $\CL$ exists by \cite{Bli.int} Theorem~4.1. Let $L$ be a minimal generator of $\CL$, i.e. a coherent minimal $\gamma$--sheaf such that $\Gen L = \CL$ which exists due to \autoref{thm.main}. Because of \autoref{t.commuteslocal} it follows that $L_x$ is also a minimal $\gamma$--sheaf and $\Gen L_x \cong \CL_x$. But from \cite{Bli.int} Proposition~4.5 we know that the unique minimal root of $\CL_x$ is $\tau_{\omega_{A_x}}$, the parameter test module of $A_x$. It follows that $L_x \cong \tau_{\omega_{A_x}}$ by uniqueness. To see that $L \subseteq \Ext^{n-d}(R/I,R)$ we just observe that $\Ext^{n-d}(R/I,R)$ with the map induced by $R/I^{[q]} \to R/I$ is a $\gamma$--sheaf which generates $H^{n-d}_I(R)$. Hence by minimality of $L$ we have the desired inclusion.
\end{proof}

\subsection{Test ideals and minimal $\gamma$--sheaves}
We consider now the simplest example of a $\gamma$--sheaf, namely that of a free rank one $R$--module $M (\cong R)$. That means that via the identification $R \cong \Fr^*R$ the structural map
\[
    \gamma: M \cong R \to[f \cdot] R \cong \Fr^*R \cong \Fr^*M
\]
is given by multiplication with an element $f \in R$. It follows that $\gamma^e$ is given by multiplication by $f^{1+q+\ldots+q^{e-1}}$ under the identification of $\Fr^{e*}R \cong R$

We will show that the minimal $\gamma$--subsheaf of the just described $\gamma$-sheaf $M$ can be expressed in terms of generalized test ideals. We recall from \cite{BliMusSmi.hyp} Lemma \ref*{lem0} that the test ideal of a principal ideal $(f)$ of exponent $\alpha= \frac{m}{q^e}$ is given by
\[
    \tau(f^\alpha) = \text{smallest ideal $J$ such that $f^m \in J^{[q^e]}$}
\]
by Lemma \ref*{lem1} of op. cit. $\tau(f^\alpha)$ can also be characterized as $\Fr^{-e}$ of the $D^{(e)}$--module generated by $f^m$. We set as a shorthand $J_e = \tau(f^{(1+q+q^2+\ldots+q^{e-1})/q^e})$ and repeat the definition:
\begin{align*}
    J_e = \text{smallest ideal $J$ of $R$ such that $f^{1+q+q^2+\ldots+q^{e-1}} \in J^{[q^e]}$}
\end{align*}
and further recall from \autoref{s.max_nil_quotient} that
\begin{align*}
    M_e &= \text{smallest ideal $I$ of $R$ such that $f\cdot M_{e-1} \subseteq I^{[q]}$}
\end{align*}
with $M_0=M$.
\begin{lem}
    For all $e \geq 0$ one has $J_e = M_e$.
\end{lem}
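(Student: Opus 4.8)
The plan is to prove $J_e = M_e$ by induction on $e$, exploiting the fact that both sides are characterized as \emph{smallest} ideals satisfying a membership condition, and that $M$ is a free rank-one module so that the structural map iterates by multiplication by $f^{1+q+\cdots+q^{e-1}}$. For $e=0$ both ideals are $R=M_0$ (empty product convention), so the base case is immediate. For the inductive step I would unwind the two definitions carefully under the identifications $\Fr^{e*}R\cong R$: the ideal $M_e$ is the smallest $I$ with $f\cdot M_{e-1}\subseteq I^{[q]}$, while $J_e$ is the smallest $J$ with $f^{1+q+\cdots+q^{e-1}}\in J^{[q^e]}$.

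The key computation is to show these two conditions define the same ideal once we substitute the inductive hypothesis $M_{e-1}=J_{e-1}$. First I would observe that the condition $f^{1+q+\cdots+q^{e-1}}\in J^{[q^e]}$ can be rewritten as $f\cdot f^{q+q^2+\cdots+q^{e-1}} \in (J^{[q^{e-1}]})^{[q]}$, and that $f^{q+q^2+\cdots+q^{e-1}} = (f^{1+q+\cdots+q^{e-2}})^{[q]}$ in the sense that it is the image under $\Fr^*$ of $f^{1+q+\cdots+q^{e-2}}$. So $J_e$ is the smallest ideal $J$ such that $f\cdot (f^{1+q+\cdots+q^{e-2}} R)^{[q^{e-1}]}\subseteq (J^{[q^{e-1}]})^{[q]}$ — but after Frobenius descent this is exactly the condition $f\cdot J_{e-1}\subseteq J^{[q]}$, since $J_{e-1}$ is by definition the smallest ideal whose $q^{e-1}$-th Frobenius power contains $f^{1+q+\cdots+q^{e-2}}$, and $\Fr^{-e+1}$ of the $D^{(e-1)}$-module it generates recovers $J_{e-1}$. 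Thus $J_e$ is the smallest $J$ with $f\cdot J_{e-1}\subseteq J^{[q]}$, which by the inductive hypothesis $J_{e-1}=M_{e-1}$ is precisely the defining property of $M_e$.

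A cleaner way to organize this, which I would likely adopt to avoid index bookkeeping, is to use the $D$-module characterization recalled from \cite{BliMusSmi.hyp}: $J_e = \Fr^{-e}(D^{(e)}_R\cdot f^{1+q+\cdots+q^{e-1}})$, and separately to identify the iterated functor $\FrG^{-e}$ applied to $M$ with $M_e$ via \autoref{t.Mminimal} — indeed $M_e=\FrG^{-1}M_{e-1}=\Fr^{-1}(D^{(1)}_R\gamma(M_{e-1}))$, and one checks by induction that $\FrG^{-e}M=\Fr^{-e}(D^{(e)}_R\gamma^e(M))$, using that $D^{(e)}_R$ is generated over $D^{(1)}_R$ by $\Fr^*$-images of $D^{(e-1)}_R$ together with the compatibility $\Fr^{-1}\circ\Fr^*=\id$. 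Under the identification $\Fr^{e*}R\cong R$ the element $\gamma^e(1)$ is exactly $f^{1+q+\cdots+q^{e-1}}$, so the two descriptions literally coincide.

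The main obstacle I anticipate is purely notational rather than conceptual: one must be scrupulous about the identifications $\Fr^{e*}R\cong R$ and about how the bracket power $(-)^{[q^e]}$ interacts with the splitting of $1+q+\cdots+q^{e-1}$ as $1 + q(1+q+\cdots+q^{e-2})$, since a careless step here conflates $f^m$ with $(f^{\lfloor m/q\rfloor})^q$ incorrectly. The induction itself, and the reduction to "smallest ideal with property $P$" on both sides, is forced once the bookkeeping is set up correctly; no finiteness or localization input beyond what has already been established is needed.
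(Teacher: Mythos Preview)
Your second ($D^{(e)}$-module) approach is valid and arguably cleaner than what the paper does: the identity $\FrG^{-e}M = \Fr^{-e}(D^{(e)}_R\gamma^e(M))$---equivalently, that $M_e$ is the smallest $N\subseteq M$ with $\gamma^e(M)\subseteq\Fr^{e*}N$---holds, and from it the lemma is immediate since $\gamma^e(1)=f^{1+q+\cdots+q^{e-1}}$ under the identification $\Fr^{e*}R\cong R$. Your suggested inductive justification via ``$D^{(e)}_R$ generated over $D^{(1)}_R$ by $\Fr^*$-images of $D^{(e-1)}_R$'' is murky, though; the clean way to establish the iterated characterization is to apply \autoref{t.iteratedFrobInv} repeatedly, exactly as in the proof of \autoref{prop.const-miniml}~(b)$\Rightarrow$(c). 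The paper does not package the argument this way.

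Your first approach, by contrast, has a genuine gap beyond the bookkeeping you anticipate. Apart from a slip (the bracket power on the left of your displayed containment should be $[q]$, not $[q^{e-1}]$), the split $1+q+\cdots+q^{e-1}=1+q(1+\cdots+q^{e-2})$ does \emph{not} yield the equivalence with $f\cdot J_{e-1}\subseteq J^{[q]}$ by any direct descent: the lone factor $f$ is not a $q^{e-1}$-th power, so it is not central for the relevant $D^{(e-1)}$-action and you cannot peel off $\Fr^{(e-1)*}$ past it. The paper instead splits off the \emph{top} term, writing $f^{1+\cdots+q^{e-1}}=f^{q^{e-1}}\cdot f^{1+\cdots+q^{e-2}}$, and then invokes the colon-ideal identity $(J^{[q^e]}:f^{q^{e-1}})=(J^{[q]}:f)^{[q^{e-1}]}$, which holds by flatness of Frobenius. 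This gives $f^{1+\cdots+q^{e-2}}\in(J_e^{[q]}:f)^{[q^{e-1}]}$, hence $J_{e-1}\subseteq(J_e^{[q]}:f)$ by minimality of $J_{e-1}$, i.e.\ $f\cdot J_{e-1}\subseteq J_e^{[q]}$; combined with the easy inclusion this is the paper's proof, organized as two separate containments $J_e\subseteq M_e$ and $M_e\subseteq J_e$ rather than a single equivalence of defining conditions.
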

\begin{proof}
    The equality is true for $e=1$ by definition. We first show the inclusion $J_e \subseteq M_e$ by induction on $e$.
    \[
    \begin{split}
        M_e^{[p^e]} &\supseteq (f\cdot M_{e-1})^{[q^{e-1}]} = (f^{q^{e-1}}M_{e-1}^{[q^{e-1}]}) \\
                    &= (f^{q^{e-1}}J_{e-1}^{[q^{e-1}]}) \supseteq f^{q^{e-1}} \cdot f^{1+q+q^2+\ldots+q^{e-2}} \\
                    &=f^{1+q+q^2+\ldots+q^{e-1}}
    \end{split}
    \]
    since $J_e$ is minimal with respect to this inclusion we have $J_e \subseteq M_e$.

    Now we show for all $e \geq 1$ that $f \cdot J_{e-1} \subseteq J_e^{[q]}$. The definition of $J_e$ implies that
    \[
        f^{1+q+\ldots+q^{e-2}} \in (J^{[q^e]} \colon f^{q^{e-1}}) = (J^{[q]} \colon f)^{[q^{e-1}]}
    \]
    which implies that $J_{e-1} \subseteq (J^{[q]} \colon f)$ by minimality of $J_{e-1}$. Hence $f\cdot J_{e-1} \subseteq J^{[q]}$. Now, we can show the inclusion $M_e \subseteq J_e$ by observing that by induction one has
    \[
        J_e^{[q]} \supseteq f \cdot J_{e-1} \supseteq f \cdot M_{e-1}\, .
    \]
    which implies by minimality of $M_e$ that $M_e \subseteq J_e$.
\end{proof}
This shows that the minimal $\gamma$--sheaf $M_\min$, which is equal to $M_e$ for $e \gg 0$ by \autoref{prop.const-miniml}, is just the test ideal $\tau(f^{(1+q+q^2+\ldots+q^{e-1})/q^e})$ for $e \gg 0$. As a consequence we have:
\begin{prop}\label{t.MinimalTestIdealChar}
    Let $M$ be the $\gamma$--sheaf given by $R \to[f\cdot] R \cong \Fr^*R$. Then $M_\min = \tau(f^{(1+q+q^2+\ldots+q^{e-1})/q^e})$ for $q \gg 0$. In particular, $M_\min \supseteq \tau(f^{\frac{1}{q-1}})$ and the $F$-pure-threshold of $f$ is $\geq \frac{1}{q-1}$ if and only if $M$ is minimal.
\end{prop}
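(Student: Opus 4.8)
The plan is to deduce everything from the Lemma just proved together with \autoref{prop.const-miniml}, using two standard facts about generalized test ideals from \cite{BliMusSmi.hyp}: that $\tau(f^c)$ is weakly decreasing in $c$, and that $\operatorname{fpt}(f) = \sup\{\,c \ge 0 : \tau(f^c) = R\,\}$. Write $\alpha_e \defeq (1+q+\cdots+q^{e-1})/q^e$. The first sentence of the proposition is immediate: the Lemma gives $M_e = J_e = \tau(f^{\alpha_e})$, and by \autoref{prop.const-miniml} the descending chain $(M_e)$ is eventually constant with value $M_\min$, so $M_\min = \tau(f^{\alpha_e})$ for $e\gg0$. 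Since $1+q+\cdots+q^{e-1} = (q^e-1)/(q-1)$, one has $\alpha_e = \tfrac1{q-1}(1-q^{-e})$, so the $\alpha_e$ strictly increase to $\tfrac1{q-1}$ while remaining $<\tfrac1{q-1}$; monotonicity of test ideals then gives $M_\min = \tau(f^{\alpha_e}) \supseteq \tau(f^{1/(q-1)})$ for $e\gg0$, which is the first ``in particular'' clause.

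For the equivalence with the $F$-pure threshold, I would first note that, whenever $f$ is a nonzerodivisor, $M=(R,f\cdot)$ satisfies \autoref{d.minimal}(a) automatically (\autoref{lem.inj-char}), so that $M_\min$ equals the stable value $M_e$ ($e\gg0$) of the chain; hence $M$ is minimal iff $M=M_\min$, i.e.\ iff $\tau(f^{\alpha_e}) = R$ for $e\gg0$. Then: if $\operatorname{fpt}(f) \ge \tfrac1{q-1}$, then $f$ is a nonzerodivisor --- a zerodivisor would, $R$ being reduced, vanish identically on a whole irreducible component, forcing $\tau(f^c)\ne R$ for all $c>0$ and hence $\operatorname{fpt}(f)=0$ --- and $\alpha_e < \tfrac1{q-1} \le \operatorname{fpt}(f)$ forces $\tau(f^{\alpha_e}) = R$ for every $e$, so $M$ is minimal. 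Conversely, if $M$ is minimal then $\tau(f^{\alpha_e}) = R$ for $e\gg0$; being a descending chain of ideals whose stable value is $R$, in fact $\tau(f^{\alpha_e}) = R$ for all $e$, and for any $c<\tfrac1{q-1}$ one may choose $e$ with $\alpha_e > c$ to get $\tau(f^c) \supseteq \tau(f^{\alpha_e}) = R$; therefore $\operatorname{fpt}(f) \ge \tfrac1{q-1}$.

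I do not expect a genuine obstacle here, since the real work is already contained in the Lemma and in \autoref{prop.const-miniml}. The only points requiring care are quoting the correct elementary properties of generalized test ideals from \cite{BliMusSmi.hyp} (monotonicity in the exponent, and the description of $\operatorname{fpt}$ as a supremum), checking that the two ``for $e$ sufficiently large'' conditions --- stabilization of $(M_e)$ and $\alpha_e > c$ --- can be met simultaneously, and handling the zerodivisor edge case in the $F$-pure threshold statement.
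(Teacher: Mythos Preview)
Your proof is correct and follows the same route as the paper: identify $M_e$ with $\tau(f^{\alpha_e})$ via the preceding Lemma, invoke \autoref{prop.const-miniml} for the stabilization, and use monotonicity of test ideals together with $\alpha_e \nearrow \tfrac{1}{q-1}$. If anything you are more careful than the paper: the paper's ``Conversely'' clause is in fact just the contrapositive of the direction already shown, whereas you genuinely argue both implications, and you also address condition~(a) of \autoref{d.minimal} (injectivity of $\gamma_M$, i.e.\ $f$ a nonzerodivisor), which the paper leaves implicit.
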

\begin{proof}
    For $e \gg 0$ the increasing sequence of rational numbers $(1+q+q^2+\ldots+q^{e-1})/q^e$ approaches $\frac{1}{q-1}$. Hence $M_e = \tau(f^{(1+q+q^2+\ldots+q^{e-1})/q^e}) \supseteq \tau(f^{\frac{1}{q-1}})$ for all $e$. If $M$ is minimal, then all $M_e$ are equal hence the multiplier ideals $\tau(f^\alpha)$ must be equal to $R$ for all $\alpha \in [0,\frac{1}{q-1})$. In particular, the $F$-pure-threshold of $f$ is $\geq \frac{1}{q-1}$. Conversely, if the $F$--pure threshold is less than $\frac{1}{q-1}$, then for some $e$ we must have that $\tau(f^{(1+q+q^2+\ldots+q^{e-1})/q^e}) \neq \tau(f^{(1+q+q^2+\ldots+q^{e})/q^{e+1}})$ such that $M_e \neq M_{e+1}$ which implies that $M \neq M_1$ such that $M$ is not minimal.
\end{proof}

\begin{rem}
    This shows also, after replacing $f$ by $f^r$, that $\frac{r}{q-1}$ is not an accumulation point of $F$--thresholds of $f$ for any $f$ in an $F$--finite regular ring. In \cite{BliMusSmi.hyp} this was shown for $R$ essentially of finite type over a local ring since our argument there depended on \cite{Lyub} Theorem 4.2. Even though $D$--modules appear in the present article, they only do so by habit of the author, as remarked before, they can easily be avoided. 
\end{rem}
\begin{rem}
    Of course, for $r=q-1$ this recovers (and slightly generalizes) the main result in \cite{AlvBliLyub.GeneratorsDmod}.
\end{rem}
\begin{rem}
    I expect that this descriptions of minimal roots can be extended to a more general setting using the modifications of generalized test ideals to modules as introduced in the preprint \cite{TakagiTakahashi.DmodFFRT}.
\end{rem}

\bibliographystyle{amsalpha}
\bibliography{./../MyBibliography}
\end{document}